\documentclass{article}

\textwidth=15. true cm
\textheight=23 true cm
\voffset=-1.55 true cm
\hoffset = -1.75 true cm

\usepackage{amssymb} 
\usepackage{amsmath} 
\usepackage{latexsym} 
\usepackage{theorem} 
\usepackage{color}
\usepackage{epsfig}
\usepackage{cases}

\theorembodyfont{\itshape} 

\newtheorem{theorem}{Theorem}[section]
\newtheorem{lemma}[theorem]{Lemma}

\newtheorem{corollary}[theorem]{Corollary}
\newtheorem{proposition}[theorem]{Proposition}

\newtheorem{definition}[theorem]{Definition}

\newtheorem{remark}[theorem]{Remark}
\newenvironment{proof}{{\par\addvspace{0.1cm}\noindent \bf Proof. }}{\hfill$\Box$\par\medskip} 
 

%
%

\def\n{n}

\def\an{{\lambda-\n}}
\def\a{\alpha}
\def\e{\varepsilon}
\def\la{\lambda}
\def\de{\delta}
\def\Om{\Omega}
\def\pO{\partial\Omega}
\def\AS{\sigma_{n-1}}
\def\RR{\mathbb{R}}

\def\interior#1{\stackrel{\circ}{#1}}

\def\rhosub#1{\mbox{\large $\rho $}_{\mbox{\small $\!{}_{#1}$}}}

\def\R{\Re\mathfrak{e} \,}

%

\if0

\fi


\title{\bf Uniqueness of centers of nearly spherical bodies}
\author{Jun O'Hara}

\numberwithin{equation}{section}

\setlength\arraycolsep{1pt}

\begin{document}

\maketitle

\begin{abstract} 
An $r^{\an}$-center of a compact body $\Om$ in an $n$ dimensional Euclidean space is a point that gives an extremal value of the regularized Riesz potential, which is (Hadamard's regularization of) the integration on $\Om$ of the distance from the point to the power $\an$. 
We show that for any real number $\la$ if a compact body is sufficiently close to a ball in the sense of asphericity then the $r^{\an}$-center is unique. 
We also study the regularized potentials of a unit ball. 
\end{abstract}

\medskip
{\small {\it Key words and phrases}. Asphericity, minimal ring, Hausdorff distance, parallel body, Riesz potential, radial center. }

{\small 2010 {\it Mathematics Subject Classification}: 51M16, 51F99, 52A40, 31B99.}

\section{Introduction}
A {\em compact body} in $\RR^\n$ is a compact subset of $\RR^\n$ which is a closure of an open subset of $\RR^\n$. 
Let $\Om$ be a compact body with piecewise $C^1$ bounary\footnote{We use Stokes' theorem to derive the boundary integral formulae \eqref{V_alpha_boundary}, \eqref{V0_boundary}. Regularity of the boundary is necessary for Stokes' theorem to be applicable. } $\pO$. 
Let $\interior\Om$ and $\Om^c$ be the interior and the complement of $\Om$ respectively. 
In \cite{O1} the author defined the {\em regularized $r^\an$-potential} $V_\Om^{(\la)}(x)$ of a compact body $\Om$ at a point $x$ in $\RR^n$, where $\la$ is a real number,  by
\begin{numcases}
{\hspace{-0.5cm} V_\Om^{(\la)}(x)=}
\displaystyle \int_\Om{|x-y|}^{\an}\,d\mu(y) & $(0<\la\>\mbox{ or }\>x\in\Om^c)$, \nonumber \\ 
\displaystyle \lim_{\e\to+0}\left(\int_{\Om\setminus B^\n_\e(x)}{|x-y|}^{-\n}\,d\mu(y)-\AS\log\frac1\e\right) & $\big(\la=0, \, x\in\interior\Om\big)$, \label{def_potential_la=0_interior} \\ 
\displaystyle \lim_{\e\to+0}\left(\int_{\Om\setminus B^\n_\e(x)}{|x-y|}^\an\,d\mu(y)-\frac{\AS}{-\la}\cdot\frac1{\e^{-\la}}\right) & $\big(\la<0, \, x\in\interior\Om\big)$,\label{def_potential_la<0_interior}
\end{numcases}
where $\mu$ is the standard Lesbegue measure of $\RR^\n$, $\AS$ is the volume of the $(n-1)$-dimensional unit sphere $S^{\n-1}$ and $B^\n_\e(x)$ is an $\n$-ball with center $x$ and radius $\e$. 
We remark that when $\la\le0$ and $x\in \interior\Om$, $V_\Om^{(\la)}(x)$ is Hadamrd's finite part of a divergent integral $\int_\Om{|x-y|}^{\an}\,d\mu(y)$.  
We do not consider the case when $\la\le0$ and $x\in\pO$. 

In particular, when $\Om$ is convex, $x\in\interior\Om$ and $\la\ne0$,  $V_\Om^{(\la)}(x)$ can be expressed as 
\[
V_\Om^{(\la)}(x)=\frac1\la\int_{S^{\n-1}}\left(\rhosub{\Om{-x}}(v)\right)^\la\,d\sigma(v),
\]
where $\sigma$ is the standard Lebesgue measure of $S^{\n-1}$, $\Om{-x}=\{y-x\,|\,y\in\Om\}$ and $\rhosub{\Om{-x}}:S^{\n-1}\to\RR_{>0}$ is the {\em radial function} 
given by $\mbox{\large $\rho  $}_{\Om{-x}}(v)=\sup\{a\ge0\,|\,x+av\in \Om\}$. 
The regularized potential $V_\Om^{(\la)}(x)$ restricted to the set of convex bodies coincides with the {\em dual mixed volume} %
of $\Om-x$ introduced by Lutwak (\cite{Lu75,Lu88}) up to multiplication by a constant factor. 

In \cite{O1} the author defined an {\em $r^{\an}$-center of $\Om$} by a point where the extremal value of $V_\Om^{(\la)}$ is attained. 
To be precise, it is a point that gives the minimum value of $V_\Om^{(\la)}$ when $\la>\n$, the maximum value of $V_\Om^{(\la)}$ when $0<\la<\n$, and the maximum value of $V_\Om^{(\la)}$ in $\interior\Om$ when $\la\le0$. 
When $\la=\n$, since $V^{(\n)}_\Om(x)$ is constantly equal to ${\rm Vol}(\Om)$, we define {\em $r^0$-center} by a point that gives the maximum value of the log potential 
\[
V^{\rm log}_\Om(x)=\int_\Om\log\frac1{|x-y|}\,d\mu(y). 
\]
For example, the center of mass is an $r^2$-center, and the incenter and circumcenter of a non-obtuse triangle can be considered as ``$r^{-\infty}$-center'' and ``$r^{\infty}$-center'' respectively (\cite{O1}). 
In particular, when $\Om$ is convex, an $r^{\an}$-center $(\la\ne0)$ coincides with the {\em radial center of order $\la$} introduced by Moszy\'nska \cite{M1} (see also \cite{HMP}).
An $r^{\an}$-center exists for any $\la$, but it is not necessarily unique. 
For example, a disjoint union of two balls has at least two $r^{\an}$-centers if $\la$ is sufficiently small. 

It would be natural to look for a sufficient condition for the uniqueness of $r^{\an}$-centers. 
The uniqueness holds if $\la\ge\n+1$ or if $\la\le1$ and $\Om$ is convex (\cite{O1}). 
The argument on the symmetry or the moving plane method (\cite{GNN}) implies that an $n$-ball has the unique $r^{\an}$-center (which coincides with the center in the ordinary sense) for any $\la$. 
In this paper we show that if a compact body $\Om$ is close to an $n$-ball then its $r^{\an}$-center is unique, where we measure the closeness to an $n$-ball by Dvoretzky's  {\em asphericity} (\cite{Dv1, Dv2}) which was originally introduced for convex bodies although we do not assume the convexity of $\Om$. 
To be precise, for any closed interval $[\la_1,\la_2]$ that does not contain $0$ or $\la_1=\la_2=0$ there is a positive number $\e$ such that if a compact body $\Om$ satisfies $B^n_\rho\subset\Om\subset B^n_{(1+\e)\rho}$ for some $\rho>0$ then the $r^\an$-center of $\Om$ is unique for each $\la$ in $[\la_1,\la_2]$. 

As a corollary we obtain the following. Let $\Om_\ell$ be a body which is obtained by inflating $\Om$ by $\ell$, 
\begin{equation}\label{parallel_body}
\Om_\ell=\bigcup_{x\in\Om}B^\n_\ell(x)=\{y\in\RR^n\,:\,\mbox{dist}(y,\Om)\le\ell\}. 
\end{equation}
Then for any closed interval $[\la_1,\la_2]$ satisfying the same condition as above, there is a positive number $C$ such that for any compact body $\Om$ in $\RR^\n$ the $r^{\la-\n}$-center of $\Om_\ell$ is unique if $\ell$ is greater than $C$ times the diameter of $\Om$ and if $\partial \Om_\ell$ is piecewise $C^1$. 
We remark that $\Om_\ell$ is called an {\em $\ell$-parallel body} of $\Om$ when $\Om$ is a convex body. 

In the last section we study the regularized potential of a unit ball. 
We give some properties and give formulae to express the regularized potential in terms of the Gauss hypergeometric functions, which integrates the results of preceding studies that dealt with some of the cases when the potential can be defined without regularization. 
We also give sufficient conditions for the regularized potential of the unit ball to be expressed by elementary functions. 

\medskip 
Acknowledgement. The author would like to thank Shigehiro Sakata for helpful discussions. He would also thank the anonymous reviewer for the careful reading and helpful suggestions.

\section{Proximity to balls}
Put $B^\n_r=B^\n_r(0)$ and $B^\n=B^\n_1$. 
\subsection{Bi-Hausdorff distance, minimal rings, and approximation by balls}
%
The {\em Hausdorff distance} between two non-empty subspaces of $\RR^n$ is defined by \[
d_H(K,L)=\inf\{\e>0\,:\,K\subset L_\e, L\subset K_\e\},
\]
where $K_\e$ and $L_\e$ are given by \eqref{parallel_body}. 
It is frequently used in convex geometry, but it is not suitable for the study of $r^\an$-centers since the centers do not behave continuously with respect to the Hausdorff distance when we deal with non-convex bodies. 
For any small positive number $\e$ one can find a space $X_\e$ with $d_H(B^\n,X_\e)\le\e$ that has at least two $r^\an$-centers if $\la\le0$ whereas $B^\n$ has a unique $r^\an$-center for any $\la\in\RR$. 
This can be done by putting 
\[
X_\e=B^\n\cap\{x=(x_1,\dots,x_n)\in\RR^n\,:\,|x_n|\ge\e\}.
\] 
This examle suggests us that a small crack might cause a big difference for the location of centers. 
To distinguish $B^\n$ and $X_\e$ effectively we shall take into account the Hausdorff distance of the complements. 
\begin{definition} \rm 
Define the {\em bi-Hausdorff distance} between two compact bodies $K$ and $L$ by 
\[
d_{bH}(K,L)=\max\{d_H(K,L), d_H(K^c,L^c)\}. 
\]
\end{definition}
Note that $d_{bH}(B^\n,X_\e)=1$, whereas $d_H(B^\n,X_\e)=\e$. 
\begin{lemma}\label{lemma_bi-Hausdorff}
Let $\Om$ be a compact body. 
We have $d_{bH}(\Om, \rho B^\n)\le\de$ if and only if $(\rho-\de)B^\n\subset\Om\subset(\rho+\de)B^\n$. 
\end{lemma}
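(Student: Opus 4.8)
The plan is to peel the bi-Hausdorff distance apart into its two constituent Hausdorff distances and then rewrite each one as a pair of elementary inclusions between concentric balls, discarding the two that turn out to be automatic. I may assume $0\le\de<\rho$; the remaining degenerate cases, where $(\rho-\de)B^\n$ is empty or a single point, are immediate.

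First I would record two standard reformulations that follow directly from the given definitions: since $\mbox{dist}(y,K)=\mbox{dist}(y,\overline K)$ and each neighbourhood $K_\e$ is closed, one has $d_H(K,L)=d_H(\overline K,\overline L)$ and, moreover, $d_H(K,L)\le\de$ holds if and only if $K\subset L_\de$ and $L\subset K_\de$ (for the forward implication intersect the inclusions $K\subset L_\e$ over all $\e>\de$ and use $\bigcap_{\e>\de}L_\e=L_\de$). Using $\overline{(\rho B^\n)^c}=\{x\in\RR^\n:|x|\ge\rho\}$ and $\overline{\Om^c}=(\interior\Om)^c$, the inequality $d_{bH}(\Om,\rho B^\n)\le\de$ becomes the conjunction of the four inclusions $\Om\subset(\rho B^\n)_\de$, $\ \rho B^\n\subset\Om_\de$, $\ \overline{\Om^c}\subset(\{|x|\ge\rho\})_\de$, and $\{|x|\ge\rho\}\subset(\overline{\Om^c})_\de$. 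A short direct computation evaluates the two neighbourhoods appearing on the $\Om$-free side of an inclusion, namely $(\rho B^\n)_\de=(\rho+\de)B^\n$ and $(\{|x|\ge\rho\})_\de=\{x:|x|\ge\rho-\de\}$. Hence the first inclusion says exactly $\Om\subset(\rho+\de)B^\n$, while the third says $\overline{\Om^c}\subset\{x:|x|\ge\rho-\de\}$, which upon taking complements reads $\interior((\rho-\de)B^\n)\subset\interior\Om$ and therefore, since $\Om$ is closed, $(\rho-\de)B^\n\subset\Om$. This settles the ``only if'' direction.

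For the ``if'' direction I would start from $(\rho-\de)B^\n\subset\Om\subset(\rho+\de)B^\n$; the first and third of the four inclusions above have just been shown to be equivalent to this, so only the second and the fourth remain, and both come from radial projection. Given $y$ with $|y|\le\rho$, the point $y'$ obtained by scaling $y$ to length $\min\{|y|,\rho-\de\}$ (take $y'=0$ if $y=0$) lies in $(\rho-\de)B^\n\subset\Om$ and satisfies $|y-y'|\le\de$, giving $\rho B^\n\subset\Om_\de$; symmetrically, given $y$ with $|y|\ge\rho$, scaling $y$ to length $\max\{|y|,\rho+\de\}$ produces a point of $\{x:|x|\ge\rho+\de\}\subset\overline{\Om^c}$ within distance $\de$ of $y$, giving $\{|x|\ge\rho\}\subset(\overline{\Om^c})_\de$. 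There is no genuine obstacle here; the only thing to watch is the bookkeeping with closures and interiors — in particular concluding $(\rho-\de)B^\n\subset\Om$ rather than merely $\interior((\rho-\de)B^\n)\subset\Om$ — which is exactly where closedness of the compact body $\Om$ enters.
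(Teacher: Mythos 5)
Your proof is correct and follows essentially the same route as the paper's: both reduce $d_{bH}(\Om,\rho B^\n)\le\de$ to the four parallel-body inclusions and evaluate the $\de$-neighbourhoods of $\rho B^\n$ and of its complement explicitly. The extra care you take with closures and with passing from the infimum in $d_H$ to the inclusions at $\e=\de$ (via $\bigcap_{\e>\de}L_\e=L_\de$), and your radial-projection verification of $\rho B^\n\subset\Om_\de$, only make explicit what the paper leaves implicit.
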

\begin{proof}
(1) Suppose $d_{bH}(\Om, \rho B^\n)\le\de$. 
Since $d_H(\Om, \rho B^\n)\le\de$, we have $\Om\subset(\rho B^\n)_\de=(\rho+\de)B^\n$. 
Since $d_H\big(\Om^c, (\rho B^\n)^c\big)\le\de$, we have $\Om^c\subset\big((\rho B^\n)^c\big)_\de=\big((\rho-\de)B^\n\big)^c$, which implies $\Om\supset(\rho-\de)B^\n$. 

(2) Suppose $(\rho-\de)B^\n\subset\Om\subset(\rho+\de)B^\n$. 
Since $\Om\subset(\rho+\de)B^\n$ we have $\Om\subset (\rho B^\n)_\de$, and since $(\rho-\de)B^\n\subset\Om$ we have $B^\n=\big((\rho-\de)B^\n\big)_\de\subset\Om_\de$. 
Therefore $d_H(\Om, \rho B^\n)\le\de$. 
Similarly, as $(\rho-\de)B^\n\subset\Om\subset(\rho+\de)B^\n$ implies 
$\big((\rho-\de)B^\n\big)^c\supset\Om^c\supset\big((\rho+\de)B^\n\big)^c$, 
since $\Om^c\subset \big((\rho-\de)B^\n\big)^c$ we have $\Om^c\subset\big((\rho B^\n)^c\big)_\de$, and since $\big((\rho+\de)B^\n\big)^c\subset \Om^c$ we have $(\rho B)^c
=\big(\big((\rho+\de)B^\n\big)^c\big)_\de\subset\big(\Om^c)_\de$. 
Therefore $d_H\big(\Om^c, (\rho B^\n)^c\big)\le\de$, which completes the proof. 
\end{proof}

Put
\[
r_\Om(x)=\inf_{y\in\Om^c}|y-x|,\>\> R_\Om(x)=\max_{y\in\Om}|y-x| \qquad (x\in\Om). 
\]
Namely, $r_\Om(x)$ is the radius of the largest ball with center $x$ that is contained in $\Om$ if $x$ is an interior point of $\Om$, and $R_\Om(x)$ is the radius of the smallest ball with center $x$ that contains $\Om$. 

Let us apply the notion of minimal rings of Bonnesen and B\'ar\'any to compact bodies which are not necessarily convex. Put $\phi_\Om(x)=R_\Om(x)-r_\Om(x)$. 

\begin{definition} \rm (\cite{Bo,B}) 
A point $x_0$ that gives the minimum value of the function $\phi_\Om=R_\Om-r_\Om\colon\Om\to\RR$ is called the {\em center of minimal ring of $\Om$} and $B^\n_{R_\Om(x_0)}(x_0)\setminus \interior B \!\!{}^{\,\n}_{r_\Om(x_0)}(x_0)$ is called the {\em minimal ring of $\Om$}. 
\end{definition}

Since $\phi_\Om$ is continuous as $\pO$ is piecewise $C^1$, 
the existence of a center of minimal ring holds for any compact body.

When $\Om$ is convex the uniqueness of the center of minimal ring was proved by Bonnesen \cite{Bo} for $n=2$ and by B\'ar\'any \cite{B} in general. 
In this case the ball whose boundary sphere is in the middle of the minimal ring, $B^\n_{(R_\Om(x_0)+r_\Om(x_0))/2}(x_0)$, is a unique ball that gives the best approximation of $\Om$ with respect to the Hausdorff distance (\cite{NS}). 

On the other hand, when $\Om$ is not convex the center of minimal ring is not necessarily unique. 
For example, let $A=(0,0), B=(1,0), C=(1/2,0)$ and $\e=0.1$, and put 
\[
\begin{array}{rcl}
X&=&\displaystyle B^2_1(A)\cup B^2_1(B)\cup
\Big(\big(B^2_2(A)\setminus \interior B\!{}^2_{2-\e}(A)\big)\cap\{(x_1,x_2)\,:\,x_1\ge1/2\}\Big) \\[2mm]
&& \displaystyle \phantom{B^2_1(A)\cup B^2_1(B)} \cup\Big(\big(B^2_2(B)\setminus \interior B\!{}^2_{2-\e}(B)\big)\cap\{(x_1,x_2)\,:\,x_1\le1/2\}\Big)
\end{array}
\]
(Figure \ref{minimalring}). 
\begin{figure}[htbp]
\begin{center}
\includegraphics[width=.25\linewidth]{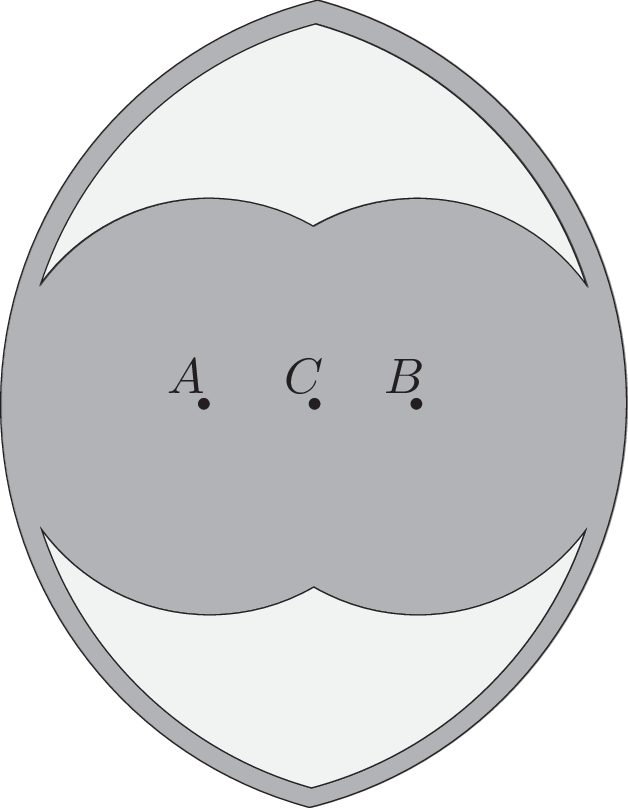}
\caption{The center of symmetry $C$ is not the center of minimal ring}
\label{minimalring}
\end{center}
\end{figure}
Then $\phi_{X}(A)=\phi_X(B)=2-1=1$ whereas $\phi_X(C)=\sqrt{4-(1/4)}-\sqrt{1-(1/4)}>1$. 

Lemma \ref{lemma_bi-Hausdorff} implies that if $x_0$ is a center of minimal ring of a (not necessarily convex) compact body $\Om$ then the ball whose boundary sphere is in the middle of the minimal ring, $B^\n_{(R_\Om(x_0)+r_\Om(x_0))/2}(x_0)$, gives the best approximation of $\Om$ with respect to the bi-Hausdorff distance.

\subsection{Asphericity}

The {\em asphericity} $\a(\Om)$ by Dvoretzky (\cite{Dv1, Dv2}) is defined by 
\[\a(\Om)=\inf_{x\in\interior\Om}\frac{R_\Om(x)}{r_\Om(x)}-1.\]
The infimum above can in fact be replaced by minimum. 
Note that there holds 
\[
\a(\Om)=\inf \big\{\e\,:\, B^\n_r(x)\subset\Om\subset B^\n_{(1+\e)r}(x)\>\mbox{ for some }\>r>0, \, x\in \interior\Om \big\}.
\]
Dudov and Meshcheryakova \cite{Du} showed that if $\Om$ is convex then the function 
\[
\psi\colon\interior\Om\ni x\mapsto \frac{R_\Om(x)}{r_\Om(x)}\in\RR_{\ge1}
\]
is quasi-convex, namely for any real number $b$, $\psi^{-1}((-\infty,b])$ is a convex set. 
A point that gives the minimum value of $\psi$ is called the {\em asphericity center}. 
It is not necessarily unique, as one can see by considering the example of an isosceles triangle of edge lengths $1,a,a$ $(a>1)$, where the set of asphericity centers is the interval on the angle bisector of the smallest angle between two intersection points, one with the bisectors of the longest edges and the other with the angle bisectors of the greatest angles. 
Dudov and Meshcheryakova \cite{Du} showed that the asphericity center is unique if $\Om$ is strictly convex or if $\Om$ is centrally symmetric\footnote{To be precise, Dudov and Meshcheryakova worked with a general norm $n(x)$, and they showed that if $\Om$ is centrally symmetric and the norm $n(x)$ is strictly quasiconvex then the asphericity center is unique. In this article we only use the standard Euclidean norm $\Vert x\Vert$, which is strictly quasiconvex}.

\section{Main theorem}
%
We assume $\n\ge2$ in what follows. 
%
\subsection{Asphericity and the uniqueness of centers}

\begin{theorem}\label{main_thm}
Let $\la_1$ and $\la_2$ $(\la_1\le\la_2)$ be a pair of real numbers such that either $0\not\in[\la_1,\la_2]$ or $\la_1=\la_2=0$. 
Then there is a positive number $\e=\e(\la_1, \la_2)$ such that any compact body $\Om$ in $\RR^\n$ with asphericity smaller than $\e$ has a unique $r^{\an}$-center for each $\la$ in $[\la_1, \la_2]$. 
\end{theorem}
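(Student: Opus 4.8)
The plan is to reduce the uniqueness statement to a quantitative estimate comparing the regularized potential $V_\Om^{(\la)}$ of a nearly spherical body with that of the unit ball, and then to exploit the strict extremality of the center of the ball. First I would normalize so that $B^\n_r\subset\Om\subset B^\n_{(1+\e)r}$ with $r=1$; by Lemma~\ref{lemma_bi-Hausdorff} this is equivalent to $d_{bH}(\Om,B^\n)\le\e$, so every point of $\Om$ (and of $\Om^c$ near the boundary) is within $\e$ of the corresponding point for $B^\n$. The key observation is that $V_{B^\n}^{(\la)}$ is a radial function of $|x|$ on $\interior B^\n$ (and on $\RR^\n$) which, by the moving-plane remark quoted in the introduction, attains its extremal value \emph{uniquely} at the origin; moreover, since $B^\n$ is smooth and symmetric, one expects $V_{B^\n}^{(\la)}$ to be real-analytic in $|x|^2$ with a nondegenerate extremum at $0$, i.e. there are constants $c=c(\la,n)>0$, $\delta_0>0$ with $|V_{B^\n}^{(\la)}(x)-V_{B^\n}^{(\la)}(0)|\ge c|x|^2$ for $|x|\le\delta_0$, the sign being that of extremality appropriate to $\la$. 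This strict second-order behavior near the center, together with strict monotonicity of $V_{B^\n}^{(\la)}(x)$ in $|x|$ on all of $\interior B^\n$ for $|x|\ge\delta_0$, is what I would establish first (deferring explicit formulae to the last section of the paper, or using the dual mixed volume representation $V_{B^\n}^{(\la)}(x)=\frac1\la\int_{S^{\n-1}}\rhosub{B^\n-x}(v)^\la\,d\sigma(v)$ for $\la\ne0$ and differentiating under the integral).

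Next I would prove a uniform perturbation bound: for $x$ ranging over a fixed compact subset of $\interior B^\n$, and for $\la$ in the compact interval $[\la_1,\la_2]$,
\[
\bigl|V_\Om^{(\la)}(x)-V_{B^\n}^{(\la)}(x)\bigr|\le K\e,
\]
where $K=K(\la_1,\la_2,n)$ is independent of $\Om$ and $x$. For $\la>0$ this is a direct estimate on $\int_{\Om\triangle B^\n}|x-y|^{\an}\,d\mu(y)$ using $\mu(\Om\triangle B^\n)\le C\e$ (which follows from $B^\n\subset\Om\subset(1+\e)B^\n$) and the boundedness of $|x-y|^{\an}$ away from the singularity, uniformly for $x$ in the chosen compact set. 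For $\la\le0$ one must handle the Hadamard-regularized integrals \eqref{def_potential_la=0_interior}–\eqref{def_potential_la<0_interior}: here I would write $V_\Om^{(\la)}(x)$ as $V_{B^\n_\rho(x)}^{(\la)}(x)$ (explicitly computable, radially symmetric) plus $\int_{\Om\setminus B^\n_\rho(x)}|x-y|^{\an}\,d\mu(y)$ for a fixed small $\rho$ with $B^\n_\rho(x)\subset\Om$, so that the regularization is confined to an exactly-spherical piece and the remaining integrand is bounded; the difference $V_\Om^{(\la)}(x)-V_{B^\n}^{(\la)}(x)$ then again reduces to an integral over $\Om\triangle B^\n$ of a bounded function, giving the $K\e$ bound. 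One also needs the analogous statement for $\la=\n$ with the log potential $V_\Om^{\rm log}$, handled the same way.

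Combining the two ingredients: for $\e$ small, any $r^{\an}$-center $x^\ast$ of $\Om$ must lie in $\interior B^\n$ (for $\la\le0$ this is part of the definition; for $0<\la<\n$ and $\la>\n$ one checks, using the $K\e$ bound and the strict monotonicity of $V_{B^\n}^{(\la)}$ near $\pO[1mm]$, that the extremum cannot be achieved near the boundary of $\Om$, hence lies well inside). Then for such $x^\ast$,
\[
\bigl|V_{B^\n}^{(\la)}(x^\ast)-V_{B^\n}^{(\la)}(0)\bigr|
\le \bigl|V_\Om^{(\la)}(x^\ast)-V_\Om^{(\la)}(0)\bigr|+2K\e
\le 2K\e,
\]
since $x^\ast$ is the extremizer of $V_\Om^{(\la)}$ and hence $V_\Om^{(\la)}(x^\ast)$ is at least as extremal as $V_\Om^{(\la)}(0)$. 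By the strict monotone/quadratic lower bound on $|V_{B^\n}^{(\la)}(x)-V_{B^\n}^{(\la)}(0)|$, this forces $|x^\ast|^2\le (2K/c)\e$, so all $r^{\an}$-centers lie in a small ball $B^\n_{\sqrt{(2K/c)\e}}(0)$. Finally, on this small ball I would show $V_\Om^{(\la)}$ is \emph{strictly} concave or convex (matching the extremality type), hence has a unique extremizer there: differentiating twice, the Hessian of $V_\Om^{(\la)}$ is within $O(\e)$ of that of $V_{B^\n}^{(\la)}$ uniformly on the small ball, and the latter is a definite matrix $\approx 2c\,\mathrm{Id}$ by the nondegeneracy established above, so for $\e$ small the Hessian of $V_\Om^{(\la)}$ is definite of the same sign throughout. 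Uniqueness follows. The compactness of $[\la_1,\la_2]$ (with $0\notin[\la_1,\la_2]$, or the degenerate case $\la_1=\la_2=0$ treated separately via the log potential) is used to make $c$, $K$, $\delta_0$ uniform in $\la$.

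The main obstacle I anticipate is the uniform $C^2$-control of the regularized potentials for $\la\le0$: unlike the $\la>0$ case, one cannot differentiate $V_\Om^{(\la)}$ naively under the integral sign because of the Hadamard regularization, and one must justify that the "splitting off a spherical core" trick commutes with differentiation in $x$ and yields Hessian estimates that are uniform both in $x$ over the small ball and in $\la$ over $[\la_1,0]$. A secondary difficulty is ruling out extremizers near $\pO$ when $0<\la<\n$, since there $V_{B^\n}^{(\la)}$ blows up or behaves singularly as $|x|\to1$ only for certain ranges of $\la$; one needs the precise boundary asymptotics of $V_{B^\n}^{(\la)}$, which is exactly the kind of information the final section on the potential of a ball supplies.
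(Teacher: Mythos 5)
Your overall architecture --- confine every $r^{\an}$-center to a small ball about the origin, then get uniqueness there from definiteness of the Hessian --- is the same as the paper's (Lemma \ref{lemma1} plus Corollary \ref{coro4}), and your Hessian step is essentially the paper's Lemma \ref{lemma1}. The gaps are in the localization step. Your perturbation bound $|V_\Om^{(\la)}(x)-V_{B^\n}^{(\la)}(x)|\le K\e$ is only asserted for $x$ in a fixed compact subset of the open unit ball, but for $\la>0$ a center extremizes over all of $\RR^\n$, and for $\la\le0$ over all of the interior of $\Om$, which reaches out to $|x|=1+\e$; the bound does not extend there because $|x-y|^{\an}$ is unbounded on $\Om\setminus B^\n$ when $x$ approaches the unit sphere. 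You explicitly defer the exclusion of near-boundary and exterior points to ``boundary asymptotics of $V_{B^\n}^{(\la)}$,'' but that is exactly the part requiring an argument. The paper avoids asymptotics entirely: for $\la_2<\n$ it uses domain monotonicity $V_\Om^{(\la)}(x)\le V_{B^\n_{r}}^{(\la)}(x)$ (Lemma \ref{lemma0}(1)) together with the homothety law \eqref{homothety} and the strict radial decrease of the ball's potential to show $V_\Om^{(\la)}(x)<V_\Om^{(\la)}(0)$ for \emph{all} $x$ with $|x|\ge\rho$ at once (Lemma \ref{lemma2}); and for $\la_1>\n-1$, where that comparison constant degenerates, it proves instead that the outward radial derivative is negative on the whole annulus (Lemma \ref{lemma3}). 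Some such device is needed in your step (a).

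The second, more decisive issue is that your constants degenerate at $\la=\n$. Since $V^{(\n)}_\Om\equiv{\rm Vol}(\Om)$, the coefficient $c(\la,n)$ in your lower bound $|V^{(\la)}_{B^\n}(x)-V^{(\la)}_{B^\n}(0)|\ge c|x|^2$ tends to $0$ as $\la\to\n$ while $K$ does not, so the radius $\sqrt{2K\e/c}$ of your localizing ball blows up and the concluding compactness argument fails for any interval containing $\n$ in its interior --- and the theorem must cover $\la$ up to $\n+1$. The paper's normalization $\widehat V^{(\la)}_\Om=V^{(\la)}_\Om/(\n-\la)$, with $r^0/0$ replaced by $\log r$, exists precisely to make all first- and second-derivative estimates continuous across $\la=\n$; your proof needs this rescaling (and needs to treat the $r^0$-center, i.e.\ $\la=\n$, via the log potential $V^{\log}_\Om$ --- not the case $\la_1=\la_2=0$, which is the \emph{other} excluded value, where $V^{(\la)}$ genuinely diverges as $\la\to0$ and only the isolated point $\la=0$ with its own Hadamard regularization \eqref{def_potential_la=0_interior} is admissible). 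With these two repairs your argument would track the paper's proof quite closely.
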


Remark that we can assume $\la_2\le\n+1$ since any compact body has a unique $r^{\an}$-center if $\la\ge \n+1$ as we commented in the introduction. 
Fix $\n$, $\la_1$ and $\la_2$ such that $[\la_1,\la_2]\subset(-\infty,0)$ or $[\la_1,\la_2]\subset(0,n+1]$ or $\la_1=\la_2=0$ in what follows. 

\begin{proof}
The proof consists of two steps. 

First we show there are positive numbers $\rho$ $(0<\rho<1)$ and $r_1$ $(r_1>1)$ such that if a compact body $\Om$ satisfies $B^\n\subset \Om\subset B^\n_{r_1}$ then there is at most one $r^{\an}$-center in $B^\n_\rho$ for each $\la$ in $[\la_1, \la_2]$. 
This follows from Lemma \ref{lemma1} below. 

Next we show there is a positive number $r_2>1$ such that if $B^\n\subset \Om\subset B^\n_{r_2}$ then no point in the complement of $B^\n_\rho$ can be an $r^{\an}$-center for any $\la$ in $[\la_1, \la_2]$. 
This follows from Corollary \ref{coro4} below. 

Then the theorem follows if we put $\e=\min\{r_1,r_2\}$. 
\end{proof}

Let us list some formulae to be used later. 
\begin{itemize}
\item {\rm (\cite{O1} Proposition 2.5)} We have 
\begin{equation}\label{V^negative}
V^{(\la)}_\Om(x)=-\int_{\Om^c}{|x-y|}^{\an}\,d\mu(y) \hspace{0.5cm}\big(\la<0, \, x\in\interior\Om\big).
\end{equation}
\item {\rm (\cite{O1} Proposition 2.3)} Under a homothety $\RR^\n\ni x\mapsto k x\in \RR^\n$ $(k>0)$, 
\begin{equation}\label{homothety}
V_{k\Om}^{(\la)}(kx)=\left\{
\begin{array}{ll}
\displaystyle k^{\la}V_{\Om}^{(\la)}(x) & \hspace{0.3cm}\mbox{if $\la\ne0$},\\[1mm]
\displaystyle V_{\Om}^{(0)}(x)+\AS\log k& \hspace{0.3cm}\mbox{if $\la=0$ and $x\in\interior\Om$,} 
\end{array}
\right.
\end{equation}
where $\AS$ is the volume of the unit $(n-1)$-sphere. 
\item {\rm (\cite{O1} Theorem 2.8)} When $x\not\in\pO$ the potential can be expressed by the boundary integral as 
\begin{numcases}
{V_\Om^{(\la)}(x)=}
\displaystyle \displaystyle \frac1{\la}\int_{\partial\Om} {|x-y|}^{\an}(y-x)\cdot \nu\,d\sigma (y) & $(\la\ne0)$,\label{V_alpha_boundary}\\[1mm]
\displaystyle \displaystyle \int_{\partial\Om} \frac{\log|x-y|}{{|x-y|}^\n}\,(y-x)\cdot \nu\,d\sigma (y) & $(\la=0)$, \label{V0_boundary}
\end{numcases}
where $\nu$ is the unit outer normal vector of $\Om$. 
\end{itemize}

It is useful to consider 
\[
\widehat V_\Om^{(\la)}(x)=\left\{
\begin{array}{ll}
\displaystyle \frac1{\n-\la}\, V_\Om^{(\la)}(x) & \qquad (\la\ne\n), \\[4mm]
\displaystyle V_\Om^{\log}(x)=\int_\Om \log\frac1{|x-y|}\,d\mu(y) & \qquad (\la=\n) 
\end{array}
\right.
\]
to deal with the derivatives of the potentials in a uniform manner. 
Then an $r^\an$-center is a point that gives the maximum value of $\widehat V_\Om^{(\la)}$ on $\RR^n$ $(\la>0)$ or on $\interior\Om$ $(\la\le0)$. 
We replace $r^0/0$ by $\log r$ $(r>0)$ in our formulae 
in what follows, although $\log r=\lim_{a\to0}(r^a-1)/a$ in fact, so that we do not have to deal with the case $\la=\n$ separately. 
Then 
\begin{eqnarray}
\displaystyle \frac{\partial \widehat V_{\Om}^{(\la)}}{\partial x_j}(x)
&=&\displaystyle \int_{\partial\Om}\frac{{|x-y|}^{\an}}{\an}\,e_j\cdot \nu\,d\sigma (y) \hspace{1.4cm}  (x\not\in\pO \>\mbox{ or }\>\la>1)\label{1st_der_boundary}\\
&=&\displaystyle \int_\Om {|x-y|}^{\an-2}(y_j-x_j)\,d\mu(y) \qquad (x\in \Om^c \>\mbox{ or }\>\la>1),\label{1st_der_Omega} \\
\frac{\partial^2 \widehat V_{\Om}^{(\la)}}{\partial x_i\partial x_j}(x)
&=&\displaystyle -\int_{\partial\Om}{|x-y|}^{\an-2}(y_i-x_i)\,e_j\cdot \nu\,d\sigma (y) \qquad (x\not\in\pO \>\mbox{ or }\>\la>2) \label{f_second_partial_derivative_boundary} \\
&=&\displaystyle -\int_{\Om}{|x-y|}^{\an-4}\left((\an-2)(y_i-x_i)(y_j-x_j)+\delta_{ij}{|x-y|}^2\right)d\mu (y) \quad{}\label{f_second_partial_derivative_Omega_bis} \\
&& \hspace{7cm} (x\in\Om^c \>\mbox{ or }\>\la>2), \nonumber
\end{eqnarray}
where $e_j$ is the $j$-th unit vector of $\RR^n$ and $\delta_{ij}$ is Kronecker's delta (cf. \cite{O1} Proposition 2.9, Corollary 2.11). 

\begin{lemma}\label{lemma0}
\begin{enumerate}
\item Assume 
$\Om_1\subset\Om_2$. 
If $x\in\interior\Om_1$ or $x\in\Om_2^c$ or $\la>0$ then $V_{\Om_1}^{(\la)}(x)\le V_{\Om_2}^{(\la)}(x)$. 
\item When $\Om$ is the unit ball, 
\begin{eqnarray}
&&\displaystyle \frac{d}{dt} \widehat V_{B^\n}^{(\la)}(te_j)<0 \qquad \quad (0<t<1 \>\mbox{ or }\> \la>1 \>\mbox{ and }\> 0<t\le1), \label{first_derivative_ball} \\[1mm]
&&\displaystyle \left.\frac{d^2}{dt^2} \widehat V_{B^\n}^{(\la)}(te_j)\right|_{t=0}<0 \label{second_derivative_ball} 
\end{eqnarray}
for any $j$. 
\end{enumerate}
\end{lemma}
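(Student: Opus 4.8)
Let me sketch how I would attack the two parts.

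Part (1) is a comparison/monotonicity statement. I would split along the definition of $V^{(\la)}_\Om$. If $\la>0$ and $x\in\Om^c$, or more generally $\la>0$, the potential is a genuine (non-negative, since ${|x-y|}^{\an}>0$) integral over $\Om$, so $\Om_1\subset\Om_2$ gives $\int_{\Om_1}\le\int_{\Om_2}$ by monotonicity of the integral of a non-negative function. If $\la<0$ and $x\in\interior\Om_1\subset\interior\Om_2$, I would invoke formula \eqref{V^negative}, which rewrites $V^{(\la)}_\Om(x)=-\int_{\Om^c}{|x-y|}^{\an}\,d\mu(y)$; since $\Om_1\subset\Om_2$ implies $\Om_2^c\subset\Om_1^c$, the integral over $\Om_1^c$ dominates that over $\Om_2^c$, and the sign flip yields $V^{(\la)}_{\Om_1}(x)\le V^{(\la)}_{\Om_2}(x)$. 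If $x\in\Om_2^c$ and $\la<0$, then both potentials are ordinary integrals of a positive function and monotonicity applies directly. The case $\la=0$ is handled the same way as $\la<0$ using the analogue of \eqref{V^negative} for the log potential (or by taking $\la\to0$), noting that on $\Om_2^c$ the log potential is again an ordinary integral. The only subtlety is making sure the exceptional case ``$x\in\interior\Om_2\setminus\interior\Om_1$'' is genuinely excluded by the hypotheses when $\la\le0$ — and indeed it is, since we require $x\in\interior\Om_1$ or $x\in\Om_2^c$ or $\la>0$.

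Part (2) concerns the unit ball and the point $te_j$ on a coordinate axis. For \eqref{first_derivative_ball}, I would use the boundary-integral formula \eqref{1st_der_boundary}: on $\partial B^\n$ the outer normal at $y$ is $\nu=y$, so $e_j\cdot\nu=y_j$, giving
\[
\frac{d}{dt}\widehat V^{(\la)}_{B^\n}(te_j)=\frac1{\an}\int_{S^{\n-1}}{|te_j-y|}^{\an}\,y_j\,d\sigma(y).
\]
Exploiting the reflection symmetry $y_j\mapsto -y_j$ of the sphere, I would pair the points $y$ and its reflection $y'$ (same other coordinates, $y'_j=-y_j$); for $y_j>0$ and $0<t<1$ one has $|te_j-y|<|te_j-y'|$, hence ${|te_j-y|}^{\an}-{|te_j-y'|}^{\an}$ has the sign of $-\an$ (it is negative iff $\an>0$, i.e. $\la<\n$), so the paired integrand $({|te_j-y|}^{\an}-{|te_j-y'|}^{\an})y_j$ has the sign of $-\an\,\mathrm{sgn}(\la-\n)$... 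I need to be careful here: dividing by $\an$ in front, the overall sign of $\frac1{\an}\bigl({|te_j-y|}^{\an}-{|te_j-y'|}^{\an}\bigr)y_j$ is negative for $y_j>0$ regardless of the sign of $\an$, because $a^{\an}-b^{\an}$ and $\an$ always have opposite sign when $0<a<b$. So the integral is strictly negative. The case $\la>1$ and $t=1$ is covered by the same formula since \eqref{1st_der_boundary} is valid for $x\in\partial\Om$ when $\la>1$, and for $y\ne te_j$ on the sphere the strict inequality $|e_j-y|<|e_j-y'|$ still holds off a measure-zero set.

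For \eqref{second_derivative_ball}, I would evaluate \eqref{f_second_partial_derivative_boundary} at $x=0$: with $\nu=y$ on $S^{\n-1}$,
\[
\left.\frac{\partial^2\widehat V^{(\la)}_{B^\n}}{\partial x_j^2}\right|_{x=0}=-\int_{S^{\n-1}}|y|^{\an-2}\,y_j\cdot y_j\,d\sigma(y)=-\int_{S^{\n-1}}y_j^2\,d\sigma(y)<0,
\]
using $|y|=1$ on the sphere, and this is manifestly negative (equal to $-\AS/\n$). The main point to verify is that \eqref{f_second_partial_derivative_boundary} is legitimate at $x=0$ for all $\la$ in our range: when $\la\le2$ and $x=0\in\interior B^\n$ one must either argue directly that the Hadamard-regularized second derivative has the stated boundary representation, or pass to the limit from interior points; this is exactly the content of the cited \cite{O1} Corollary 2.11, so I would simply invoke it. The only genuine obstacle, then, is bookkeeping the validity range of the boundary formulas for $\la\le2$ together with $x\in\interior\Om$ — once that is granted, both inequalities in Part (2) reduce to a symmetrization argument and an elementary positivity, respectively. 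I expect Part (1)'s case analysis and the verification of formula applicability in Part (2) to be the only places requiring care; the computations themselves are immediate.
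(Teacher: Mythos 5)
Your treatment of part (2) is essentially identical to the paper's: the paper proves \eqref{first_derivative_ball} by comparing the integrand of \eqref{1st_der_boundary} at points $y$, $y'$ symmetric in the hyperplane $y_j=0$, and \eqref{second_derivative_ball} by observing that the $i=j$ integrand of \eqref{f_second_partial_derivative_boundary} at $x=0$ is positive off a null set; your sign bookkeeping (that $\frac1{\an}(a^{\an}-b^{\an})<0$ for $0<a<b$ regardless of the sign of $\an$, including the $\log$ convention at $\la=\n$) is correct, and the applicability of the boundary formulas for $x\in\interior\Om$ is exactly what the cited results of \cite{O1} provide. For part (1), however, the paper takes a shorter and more uniform route: it invokes Lemma 2.4 of \cite{O1}, which gives in all the listed cases the single identity $V^{(\la)}_{\Om_2}(x)-V^{(\la)}_{\Om_1}(x)=\int_{\Om_2\setminus\Om_1}|y-x|^{\la-\n}\,d\mu(y)\ge0$ (the regularizing terms, being the same for $\Om_1$ and $\Om_2$, cancel in the difference). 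Your case analysis via \eqref{V^negative} buys independence from that external lemma for $\la\ne0$, but costs you the $\la=0$ sub-case.

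That sub-case is the one genuine gap. For $\la=0$ and $x\in\interior\Om_1$ there is no analogue of \eqref{V^negative}: the complement integral $\int_{\Om^c}|x-y|^{-\n}\,d\mu(y)$ diverges logarithmically at infinity, so the potential cannot be rewritten as $-\int_{\Om^c}$. The fallback of "taking $\la\to0$" also fails, since $V^{(\la)}_\Om(x)\to\pm\infty$ as $\la\to\pm0$ (as the paper notes after Theorem \ref{potential_ball}), so the inequality does not pass to the limit. The correct repair is the difference argument above: in \eqref{def_potential_la=0_interior} the subtracted term $\AS\log\frac1\e$ is identical for $\Om_1$ and $\Om_2$, so the difference of the two regularized potentials is the honest integral $\int_{\Om_2\setminus\Om_1}|x-y|^{-\n}\,d\mu(y)$, whose integrand is positive and bounded because $x\in\interior\Om_1$ lies at positive distance from $\Om_2\setminus\Om_1$. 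With that substitution your proof of part (1) closes in all cases.
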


We remark that the second derivative of $\widehat V_{B^\n}^{(\la)}(te_j)$ $(0<t<1)$ may be positive when $t$ and $\n$ are sufficiently big. 

\begin{proof}
(1) By Lemma 2.4 of \cite{O1} we have 
\[
V_{\Om_2}^{(\la)}(x)-V_{\Om_1}^{(\la)}(x)=\int_{\Om_2\setminus\Om_1}{|y-x|}^{\la-\n}\,d\mu(y) \ge0
\]
since the integrand is positive. 

(2) \eqref{first_derivative_ball} follows from the comparison of the absolute values of the integrand of \eqref{1st_der_boundary} at a pair of points $y$ and $y'$ which are symmetric in a hyperplane given by $y_j=0$. 

\eqref{second_derivative_ball} follows from the fact that if $i=j$ then the integrand of \eqref{f_second_partial_derivative_boundary} is positive when $y_i\ne0$. 
\end{proof}

\begin{lemma}\label{lemma1}
There are positive numbers $\rho$ $(0<\rho<1)$ and $r_1$ $(r_1>1)$ such that if a compact body $\Om$ satisfies $B^\n\subset \Om\subset B^\n_{r_1}$ then the Hessian of $\widehat V_\Om^{(\la)}$
\[
H\Big(\widehat V_\Om^{(\la)}\Big)(x)=\left(\frac{\partial^2 \widehat V_{\Om}^{(\la)}}{\partial x_i\partial x_j}(x)\right)
\]
is negative definite on $B^\n_\rho$ for any $\la\in[\la_1,\la_2]$. 
\end{lemma}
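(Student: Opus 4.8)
The plan is to reduce everything to the case $\Om=B^\n$ and then argue by a uniform perturbation estimate. By the homothety formula \eqref{homothety} it suffices to produce $\rho$ and $r_1$ for bodies trapped between $B^\n$ and $B^\n_{r_1}$; the Hessian of $\widehat V^{(\la)}_\Om$ changes only by a positive scalar factor under scaling, so negative definiteness is scale invariant for $\la\ne\n$, and the $\la=\n$ case is handled through the $\widehat V$ convention already in force. First I would establish the model estimate: by Lemma \ref{lemma0}(2), for the unit ball the Hessian $H(\widehat V^{(\la)}_{B^\n})(0)$ is negative definite at the origin; by the symmetry of $B^\n$ it is in fact a negative multiple of the identity, $-c(\la)I$ with $c(\la)>0$. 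I would check, using the boundary-integral expression \eqref{f_second_partial_derivative_boundary} (valid for $x\notin\pO$ and all $\la$ in our range since we may take $\la_2\le n+1$, but in any case the formula holds off the boundary), that $c(\la)$ depends continuously on $\la$, hence has a positive minimum $c_0=\min_{\la\in[\la_1,\la_2]}c(\la)>0$ over the compact parameter interval.

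Next I would obtain continuity of the Hessian in $x$ and uniform control of the perturbation. For $x$ in a small ball $B^\n_\rho$ with $\rho<1$, the point $x$ stays in the interior of $\Om$ (since $B^\n\subset\Om$), so $x\notin\pO$ and \eqref{f_second_partial_derivative_boundary} applies; but I would instead use \eqref{f_second_partial_derivative_Omega_bis}, the integral over $\Om$, because it localizes the dependence on $\Om$ to the region $\Om\setminus B^\n\subset B^\n_{r_1}\setminus B^\n$. Splitting $H(\widehat V^{(\la)}_\Om)(x)=H(\widehat V^{(\la)}_{B^\n})(x)+\big(H(\widehat V^{(\la)}_\Om)(x)-H(\widehat V^{(\la)}_{B^\n})(x)\big)$, the first term is within $c_0/3$ of $-c_0 I$ (in operator norm) once $\rho$ is small enough, by continuity of \eqref{f_second_partial_derivative_Omega_bis} in $x$ at $x=0$, uniformly in $\la\in[\la_1,\la_2]$. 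The second term is bounded by the integral over $\Om\setminus B^\n$ of $|x-y|^{\an-4}\big(|\an-2|\,|x-y|^2+n|x-y|^2\big)=(\,|\an-2|+n)\,|x-y|^{\an-2}$; since $y\in B^\n_{r_1}\setminus B^\n$ and $x\in B^\n_\rho$ we have $1-\rho\le|x-y|\le r_1+\rho$, so the integrand is bounded by a constant $M(\la_1,\la_2,\rho,r_1)$, and the whole term is at most $M\cdot\mathrm{Vol}(B^\n_{r_1}\setminus B^\n)=M\cdot\AS'\!\cdot(r_1^n-1)$ for a dimensional constant, which tends to $0$ as $r_1\to 1^+$.

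So the recipe is: choose $\rho\in(0,1)$ small so the first (pure-ball) term is within $c_0/3$ of $-c_0I$ for all $\la\in[\la_1,\la_2]$; then choose $r_1>1$ close enough to $1$ that the perturbation term has operator norm $<c_0/3$ for all $\la\in[\la_1,\la_2]$ and all $x\in B^\n_\rho$. With these choices $H(\widehat V^{(\la)}_\Om)(x)$ lies within $2c_0/3$ of $-c_0 I$, hence is negative definite on $B^\n_\rho$ for every $\la$ in the interval, which is exactly the assertion. The main obstacle I anticipate is making the $\la$-dependence genuinely uniform: one must verify that the constants $c(\la)$ and the modulus of continuity of \eqref{f_second_partial_derivative_Omega_bis} in $x$ do not degenerate as $\la$ ranges over $[\la_1,\la_2]$, and in particular that $c(\la)$ stays bounded away from $0$ near the endpoints — this is where the fact that $0\notin[\la_1,\la_2]$ (or $\la_1=\la_2=0$, handled by the $\widehat V^{\log}$ convention) and $\la_2\le n+1$ matters, since the exponent $\an-2$ governs integrability of the kernel at $y=x$, and for $x$ in the interior $\rho<1$ we stay safely away from that singularity anyway. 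A secondary point is checking that \eqref{f_second_partial_derivative_Omega_bis} is legitimately continuous in $x$ on $B^\n_\rho$ uniformly over the compact $\la$-interval, which follows from dominated convergence with the dominating function $(\,|\an-2|+n)\,(1-\rho)^{\min(\la_1,\la_2)-n-2}$ valid on the relevant annular region.
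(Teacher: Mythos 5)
Your proposal is correct and follows essentially the same route as the paper: negative definiteness of the unit-ball Hessian at the origin, extension to a small ball $B^\n_\rho$ by continuity uniform over the compact interval $[\la_1,\la_2]$, and a perturbation bound obtained by integrating the bounded kernel of \eqref{f_second_partial_derivative_Omega_bis} over the annulus $B^\n_{r_1}\setminus B^\n$, whose volume tends to $0$ as $r_1\to1$. The only cosmetic difference is that the paper quantifies negative definiteness via the uniform continuity of the maximum-eigenvalue function of a symmetric matrix rather than via operator-norm distance to $-c(\la)I$ (note your phrase ``within $c_0/3$ of $-c_0I$'' should read ``within $c_0/3$ of $-c(\la)I$'', whose largest eigenvalue is $\le -c_0$), but this does not affect the argument.
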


\begin{proof}
(i) When $\Om$ is the unit ball and $x$ is the origin, by \eqref{f_second_partial_derivative_boundary} we have 
\[
\begin{array}{rcl}
\displaystyle \frac{\partial^2 \widehat V_{B^\n}^{(\la)}}{\partial x_i\partial x_j}(0)&=&\displaystyle 0 \qquad \mbox{ if }i\ne j, \\[4mm]
\displaystyle \frac{\partial^2 \widehat V_{B^\n}^{(\la)}}{\partial x_i^2}(0)&=&\displaystyle -\int_{\partial\Om}{|x-y|}^{\an-2} y_i^{\,2} \,d\sigma (y)<0.
\end{array}
\]
Put 
\[
C_0=\max_{\la\in[\la_1,\la_2], 1\le i\le n} \frac{\partial^2 \widehat V_{B^\n}^{(\la)}}{\partial x_i^2}(0)<0,
\]
then the maximum of the eigenvalues of the Hessian $H\big(\widehat V_{B^\n}^{(\la)}\big)(0)$ is smaller than or equal to $C_0$ for any $\la$ in $[\la_1,\la_2]$. 

\smallskip
Let $F_\Om(\la,x)$ ($\la\in[\la_1,\la_2], \, x\in B_{1/2}^n$) be the maximum of the eigenvalues of the Hessian $H\big(\widehat V_\Om^{(\la)}\big)(x)$. 
Then 
$F_{B^\n}\colon[\la_1,\la_2]\times B_{1/2}^n \to \RR$ is a continuous function. 
Since %
$F_{B^\n}(\la,0)\le C_0<0$ for any $\la\in[\la_1,\la_2]$, and $[\la_1,\la_2]\times \{0\}$ is compact, there are positive numbers $\tilde \e$ and $\rho$ ($\rho\le 1/2$) such that $F_{B^\n}([\la_1,\la_2]\times B^n_{\rho})\subset (-\infty, -2\tilde \e\,]$. 

\smallskip
(ii) Put 
\[
b=\max_{\la\in[\la_1,\la_2],x\in B^n_{\rho},1\le i,j\le \n} 
\left|\frac{\partial^2 \widehat V_{B^\n}^{(\la)}}{\partial x_i\partial x_j}(x)\right|.
\]
Let $\varphi$ be a map from ${[-(b+1),b+1]}^{n(n+1)/2}$ to $\RR$ that assigns to 
$(a_{11},\dots,a_{1n},a_{22},\dots,a_{2n},\dots,a_{nn})$ the maximum of the eigenvalues of a real symmetric matrix $(a_{ij})$. 
Since $\varphi$ is continuous on a compact set, it is uniformly continuous. 
Therefore, for any positive number $\e$ there is a positive number $\de$ $(\de\le1)$ such that if $(a_{ij})$ and $(a_{ij}')$ in ${[-(b+1),b+1]}^{n(n+1)/2}$ satisfy
$|a_{ij}-a_{ij}'|<\de$ for any $i,j$ then $|\varphi(a_{ij})-\varphi(a_{ij}')|<\e$. 

Let $\tilde \de$ be a positive number that corresponds to $\tilde \e$ which was given in (i).

\smallskip
(iii) Since the integrand of \eqref{f_second_partial_derivative_Omega_bis} is bounded when $(\la,x,y)$ belongs to $[\la_1,\la_2]\times B^\n_{\rho}\times \big(B^\n_2\setminus \interior B \!\!{}^\n\big)$, there is $r_1$ ($1<r_1\le 2$) such that 
\begin{equation}\label{estimate_2nd_der_annulus}
\int_{B^\n_{r_1}\setminus B^\n}{|x-y|}^{\an-4}\left|(\an-2)(y_i-x_i)(y_j-x_j)+\delta_{ij}{|x-y|}^2\right|d\mu (y) <\tilde\de
\end{equation}
for any $\la\in[\la_1,\la_2]$, $x\in B^\n_{\rho}$ and for any $i,j$. 

\smallskip
(iv) Suppose $B^\n\subset\Om\subset B^\n_{r_1}$. 
Then \eqref{f_second_partial_derivative_Omega_bis} and \eqref{estimate_2nd_der_annulus} imply that 
\[
\left|
\frac{\partial^2 \widehat V_{\Om}^{(\la)}}{\partial x_i\partial x_j}(x)-
\frac{\partial^2 \widehat V_{B^\n}^{(\la)}}{\partial x_i\partial x_j}(x)
\right|<\tilde\de
\]
for any $\la\in[\la_1,\la_2]$, $x\in B^\n_{\rho}$ and for any $i,j$. 
Then the argument in (ii) implies that 
$
|F_\Om(\la,x)-F_{B^\n}(\la,x)|<\tilde\e
$
for any $\la\in[\la_1,\la_2]$ and $x\in B^\n_{\rho}$. 
Since $F_{B^\n}(\la,x)\le -2\tilde\e$ for any $\la\in[\la_1,\la_2]$ and $x\in B^\n_{\rho}$ by the argument in (i), it follows that $F_\Om(\la,x)<-\tilde\e<0$ for any $\la\in[\la_1,\la_2]$ and $x\in B^\n_{\rho}$, which completes the proof. 
\end{proof}

\begin{lemma}\label{lemma2}
Assume $\la_2<\n$. 
Then there is a positive number $r_3>1$ such that 
\[
V_{B^\n_{r_3}}^{(\la)}(x)<V^{(\la)}_{B^\n}(0) \>\mbox{ if }\>\rho\le |x|< r_3
\]
for any $\la\in[\la_1,\la_2]$, where $\rho$ is a positive number given in Lemma \ref{lemma1}. 
\end{lemma}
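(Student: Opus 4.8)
The plan is to reduce the comparison of the two balls to the unit ball via the homothety formula \eqref{homothety}, to use the radial monotonicity of $V^{(\la)}_{B^\n}$ supplied by Lemma \ref{lemma0}(2), and then to close the remaining gap by a compactness argument in $(\la,r)$ as $r\to1$. The hypothesis $\la_2<\n$ enters only to ensure $\n-\la>0$, so that $\widehat V^{(\la)}_{B^\n}$ and $V^{(\la)}_{B^\n}$ differ by a positive factor.

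First I would record the radial monotonicity. By \eqref{first_derivative_ball}, $\frac{d}{dt}\widehat V^{(\la)}_{B^\n}(te_1)<0$ for $0<t<1$, so $t\mapsto V^{(\la)}_{B^\n}(te_1)$ is strictly decreasing on $[0,1)$; together with the rotational symmetry of $B^\n$ this gives $V^{(\la)}_{B^\n}(u)=V^{(\la)}_{B^\n}(|u|e_1)<V^{(\la)}_{B^\n}(0)$ for $0<|u|<1$, and in particular $V^{(\la)}_{B^\n}(u)\le V^{(\la)}_{B^\n}(\rho e_1)$ whenever $\rho\le|u|<1$. (If $\la<0$, every point occurring here lies in the interior of $B^\n$, so the potentials are defined.)

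Next, by \eqref{homothety} with $\la\ne0$ (the case $\la_1=\la_2=0$ is handled identically with the additive version), one has $V^{(\la)}_{B^\n_r}(x)=r^{\la}V^{(\la)}_{B^\n}(x/r)$ for $|x|<r$, and then $|x/r|<1$. Hence, for $\rho\le|x|<r$, using $r^{\la}>0$ and the monotonicity above (note $|x/r|\ge\rho/r$ and $\rho/r<1$),
\[
V^{(\la)}_{B^\n_r}(x)=r^{\la}V^{(\la)}_{B^\n}(x/r)\le r^{\la}V^{(\la)}_{B^\n}\big((\rho/r)e_1\big)=:h(\la,r),
\]
and $h$ is continuous on $[\la_1,\la_2]\times[1,2]$ with $h(\la,1)=V^{(\la)}_{B^\n}(\rho e_1)$.

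Finally I would compare $h$ with $V^{(\la)}_{B^\n}(0)$. By the first step and continuity of $V^{(\la)}_{B^\n}$ in $\la$, the map $\la\mapsto V^{(\la)}_{B^\n}(0)-V^{(\la)}_{B^\n}(\rho e_1)$ is continuous and strictly positive on the compact interval $[\la_1,\la_2]$, hence $\ge 2m$ for some $m>0$. Uniform continuity of $h$ then produces $\de\in(0,1]$ with $|h(\la,r)-h(\la,1)|<m$ for all $\la\in[\la_1,\la_2]$, $r\in[1,1+\de]$; set $r_3=1+\de$. Then for every $\la\in[\la_1,\la_2]$ and every $x$ with $\rho\le|x|<r_3$,
\[
V^{(\la)}_{B^\n_{r_3}}(x)\le h(\la,r_3)<h(\la,1)+m=V^{(\la)}_{B^\n}(\rho e_1)+m\le V^{(\la)}_{B^\n}(0)-m<V^{(\la)}_{B^\n}(0),
\]
which is the asserted inequality. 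The only delicate point — and the main obstacle — is making the passage $r\to1$ uniform in $\la$; routing the estimate through the single continuous function $h(\la,r)$ together with compactness of $[\la_1,\la_2]$ handles this cleanly and avoids estimating the shell integral $\int_{B^\n_r\setminus B^\n}|x-y|^{\an}\,d\mu(y)$ directly, which would otherwise need a separate argument for $\la<0$ with $1\le|x|<r_3$.
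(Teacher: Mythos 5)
Your proof is correct and follows essentially the same route as the paper's: rescale $B^\n_{r_3}$ to the unit ball via \eqref{homothety}, invoke the strict radial monotonicity of $V^{(\la)}_{B^\n}$ from Lemma \ref{lemma0}(2) (valid since $\la_2<\n$ makes $\n-\la>0$), and use compactness of $[\la_1,\la_2]$ to choose $r_3$ uniformly. The only difference is bookkeeping: the paper absorbs the factor $r_3^{\la}$ by explicit multiplicative constants in three separate sign cases ($\la_1>0$, $\la_2<0$, $\la=0$), whereas you package everything into the single function $h(\la,r)$ and appeal to uniform continuity, which handles all cases at once.
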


\begin{proof} 
The proofs are divided into the following three cases. 

(1) Suppose $\la_1>0$ i.e. $[\la_1,\la_2]\subset(0,n)$. 
First note that 
\[
[\la_1,\la_2]\times[0,1)\ni(\la,t)\mapsto V^{(\la)}_{B^\n}(te_1)\in\RR_{>0}
\]
is continuous. 
Fixing $\la$, Lemma \ref{lemma0} (2) implies that $V^{(\la)}_{B^\n}(te_1)$ is a decreasing function of $t\in[0,1)$. 
Put 
\[
C_1=\min_{\la\in[\la_1,\la_2]}\frac{V^{(\la)}_{B^\n}(0)}{V^{(\la)}_{B^\n}\left(\frac\rho2 \, e_1\right)}, 
\]
then $C_1>1$. 
Take a constant $C_2$ so that $1<C_2<\min\{2^{\la_2},C_1\}$, and put 
$r_3=C_2^{\,1/\la_2}$. 
Then $1<r_3<2$ and $r_3^\la\le r_3^{\la_2}=C_2<C_1$ $(\la\in[\la_1,\la_2])$. 
If $\rho\le t<r_3$, 
then $\rho/2<t/r_3<1$. Using \eqref{homothety} we have
\[
V_{B^\n_{r_3}}^{(\la)}(te_1)
=r_3^\la \, V^{(\la)}_{B^\n}\left(\frac t{r_3}\,e_1\right)
<r_3^\la \, V^{(\la)}_{B^\n}\left(\frac \rho{2}\,e_1\right)
<C_1\, V^{(\la)}_{B^\n}\left(\frac \rho{2}\,e_1\right)
\le V^{(\la)}_{B^\n}(0).
\]

(2) Suppose $\la_2<0$ i.e. $[\la_1,\la_2]\subset (-\infty,0)$. 
The proof is nearly parallel to the above one. Note that 
\[
[\la_1,\la_2]\times[0,1)\ni(\la,t)\mapsto V^{(\la)}_{B^\n}(te_1)\in\RR_{<0}
\]
is continuous. 
Put 
\[
C_1'=\max_{\la\in[\la_1,\la_2]}\frac{V^{(\la)}_{B^\n}(0)}{V^{(\la)}_{B^\n}\left(\frac\rho2 \, e_1\right)}, 
\]
then $0<C_1'<1$. 
Take a constant $C_2'$ so that $1>C_2'>\max\{2^{\la_1},C_1'\}$, and put 
$r_3'={C_2'}^{\,1/\la_1}$. 
Then $1<r_3'<2$ and ${r_3'}^\la\ge {r_3'}^{\la_1}=C_2'>C_1'$ $(\la\in[\la_1,\la_2])$.  %
If $\rho\le t<r_3'$, then $\rho/2<t/r_3'<1$. Using \eqref{homothety} we have 
\[
V_{B^\n_{r_3'}}^{(\la)}(te_1)
={r_3'}^\la \, V^{(\la)}_{B^\n}\left(\frac t{r_3'}\,e_1\right)
<{r_3'}^\la \, V^{(\la)}_{B^\n}\left(\frac \rho{2}\,e_1\right)
<C_1'V^{(\la)}_{B^\n}\left(\frac \rho{2}\,e_1\right)
\le V^{(\la)}_{B^\n}(0).
\]

(3) Suppose $\la_1=\la_2=0$. 
Put $r_3''=\sqrt{1+\rho^2/2}$. 
Then if $r_3''>t\ge\rho$ then \eqref{homothety}, Lemma \ref{lemma0} (2) and \eqref{V^0_up} imply 
\[
V_{B^\n_{r_3''}}^{(0)}(te_1)
=V^{(0)}_{B^\n}\left(\frac t{r_3''}\,e_1\right)+\AS\log r_3''
=\AS \log \sqrt{{r_3''}^{2}-t^2}
\le\AS\log\sqrt{{r_3''}^{2}-\rho^2}
<0=V^{(0)}_{B^\n}(0). 
\]
\end{proof}

\begin{corollary}\label{coro2}
Assume $\la_2<\n$. 
If $B^\n\subset \Om\subset B^\n_{r_3}$ then no point in the complement of $B^\n_\rho$ can be an $r^{\an}$-center for any $\la$ in $[\la_1, \la_2]$, where $\rho$ is a positive number given in Lemma \ref{lemma1}. 
\end{corollary}

\begin{proof}
Assume $|x|\ge\rho$. 
When $\la_2\le0$  
we further assume that $x\in\interior\Om$. 
Then by Lemma \ref{lemma0} (1) we have 
\[
V_\Om^{(\la)}(x)\le\displaystyle V_{B^\n_{r_3}}^{(\la)}(x) 
%
%
<\displaystyle V^{(\la)}_{B^\n}(0) 
\le\displaystyle V_\Om^{(\la)}(0),
\]
which implies that $x$ cannot be an $r^{\an}$-center of $\Om$. 

\end{proof}

\begin{lemma}\label{lemma3}
Assume $\la_1>\n-1$. 
Then there is a positive number $r_4>1$ such that if $B^\n\subset \Om\subset B^\n_{r_4}$ then 
\[
\sup_{\rho\le|x|\le r_4, \la\in[\la_1,\la_2]}\frac{d}{dt}
\left.\widehat V^{(\la)}_{\Om}\left(x+t\frac{x}{|x|}\right)\right|_{t=0}<0,
\]
where $\rho$ is a positive number given in Lemma \ref{lemma1}. 
\end{lemma}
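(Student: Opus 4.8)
The plan is to follow the two-step pattern of Lemmas~\ref{lemma1} and~\ref{lemma2}: first prove the estimate for the unit ball, then transfer it to every $\Om$ with $B^\n\subset\Om\subset B^\n_{r_4}$ once $r_4$ is close enough to $1$. Since $\la_1>\n-1\ge1$, every $\la\in[\la_1,\la_2]$ satisfies $\la>1$, so \eqref{1st_der_boundary} and \eqref{1st_der_Omega} are valid at every point of $\RR^\n$ (including on $\pO$), $\widehat V^{(\la)}_\Om$ is $C^1$, and, writing
\[
D^{(\la)}_\Om(x)=\left.\frac{d}{dt}\widehat V^{(\la)}_{\Om}\!\left(x+t\,\frac{x}{|x|}\right)\right|_{t=0}=\sum_{j}\frac{x_j}{|x|}\,\frac{\partial\widehat V^{(\la)}_{\Om}}{\partial x_j}(x)\qquad(x\ne0),
\]
the lemma asserts that $\sup\{D^{(\la)}_\Om(x):\rho\le|x|\le r_4,\ \la\in[\la_1,\la_2]\}<0$. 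By the remark after Theorem~\ref{main_thm} we may also assume $\la_2\le\n+1$, so $\an-1\in(-2,0]$ for all relevant $\la$.

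First I would show $D^{(\la)}_{B^\n}(x)<0$ for every $x\ne0$ and $\la>1$. By rotational invariance take $x=te_1$, $t>0$. Since the outer unit normal of $B^\n$ at $y\in S^{\n-1}$ is $y$ and $\int_{S^{\n-1}}y_1\,d\sigma(y)=0$, formula \eqref{1st_der_boundary} (with the convention $r^0/0=\log r$) rewrites $D^{(\la)}_{B^\n}(te_1)$ as $\int_{S^{\n-1}}h_{\an}(|te_1-y|)\,y_1\,d\sigma(y)$, where $h_\beta(r)=(r^\beta-1)/\beta$ for $\beta\ne0$ and $h_0(r)=\log r$. Pairing $y$ with $y_1>0$ to its mirror image $\bar y=(-y_1,y_2,\dots,y_\n)$ gives $|te_1-\bar y|>|te_1-y|$, and $h_\beta$ is strictly increasing on $(0,\infty)$; hence the integrand of $\int_{\{y_1>0\}}\!\bigl(h_{\an}(|te_1-y|)-h_{\an}(|te_1-\bar y|)\bigr)y_1\,d\sigma(y)$ is strictly negative, which proves the claim. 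Next, $(\la,x)\mapsto D^{(\la)}_{B^\n}(x)$ is continuous on the compact set $[\la_1,\la_2]\times\{\rho\le|x|\le2\}$ (apply dominated convergence in \eqref{1st_der_Omega}, noting the singularity $|x-y|^{\an-1}$ is locally integrable because $\an-1>-\n$), so there is $\eta>0$ with $D^{(\la)}_{B^\n}(x)\le-\eta$ on that set.

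Now suppose $B^\n\subset\Om\subset B^\n_{r_4}$ with $1<r_4\le2$. Subtracting \eqref{1st_der_Omega} for $\Om$ and for $B^\n$ gives, for $\rho\le|x|\le r_4$ and $\la\in[\la_1,\la_2]$,
\[
\bigl|D^{(\la)}_{\Om}(x)-D^{(\la)}_{B^\n}(x)\bigr|\le\int_{B^\n_{r_4}\setminus B^\n}|x-y|^{\an-1}\,d\mu(y).
\]
To bound the right-hand side uniformly I would split the shell $A=B^\n_{r_4}\setminus B^\n$ at distance $\sqrt{r_4-1}$ from $x$: the part of $A$ inside $B^\n_{\sqrt{r_4-1}}(x)$ contributes at most $\frac{\AS}{\la-1}\,(r_4-1)^{(\la-1)/2}$, while, using $\an-1\le0$ and ${\rm Vol}(A)\le C_\n\,(r_4-1)$, the part of $A$ outside contributes at most $C_\n\,(r_4-1)^{(\la-\n+1)/2}$. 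Both exponents are at least $\frac12\min\{\la_1-1,\,\la_1-\n+1\}>0$ — this is precisely where $\la_1>\n-1$ is used — so the whole bound tends to $0$ as $r_4\to1^{+}$, uniformly in $x\in\{\rho\le|x|\le r_4\}$ and $\la\in[\la_1,\la_2]$. Fixing $r_4>1$ (with $r_4\le2$) so small that this bound is $<\eta/2$, the previous paragraph yields $D^{(\la)}_{\Om}(x)\le-\eta/2<0$ for all such $x$ and $\la$, hence $\sup\{D^{(\la)}_\Om(x):\rho\le|x|\le r_4,\ \la\in[\la_1,\la_2]\}\le-\eta/2<0$, as required.

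The one delicate point is the shell estimate when $x$ itself lies in (or just inside) the thin annulus $B^\n_{r_4}\setminus B^\n$ and $\an<\n+1$, so that $|x-y|^{\an-1}$ is genuinely singular at $y=x$ and the naive bound $(\sup_A|x-y|^{\an-1})\cdot{\rm Vol}(A)$ diverges as $r_4\to1$; the two-scale split above is what resolves it, and the hypothesis $\la_1>\n-1$ is exactly what makes both resulting powers of $r_4-1$ positive. Everything else is the same soft machinery — compactness and dominated convergence — already used in Lemmas~\ref{lemma1} and~\ref{lemma2}.
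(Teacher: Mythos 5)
Your argument is correct, but the step that transfers the ball estimate to $\Om$ is organized quite differently from the paper's. The paper never bounds the absolute difference $\bigl|D^{(\la)}_{\Om}(x)-D^{(\la)}_{B^\n}(x)\bigr|$; instead it exploits the sign of the integrand of \eqref{1st_der_Omega}, which is positive on the half-space $\{y_1>t\}$ and negative on $\{y_1<t\}$, so that replacing $\Om$ by $B^\n$ on the negative side and by $B^\n_{r_4}$ on the positive side can only increase the derivative. This one-sided comparison forces a case split: for $\rho\le t<1$ the excess comes from the portion $A_+(r_4,t)$ of the annulus to the right of $te_1$ and is made small by shrinking $r_4$, while for $1\le t\le r_4$ the entire positive contribution $\Om\cap\{y_1\ge t\}$ is trapped inside a translated half-ball of small radius $k$, whose contribution scales like $k^{\la-1}$ --- that is where the paper invokes $\la-1>\n-2\ge0$. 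You replace both cases by the single uniform error bound $\int_{B^\n_{r_4}\setminus B^\n}{|x-y|}^{\an-1}\,d\mu(y)$, controlled by the two-scale split at distance $\sqrt{r_4-1}$, with the hypothesis $\la_1>\n-1$ entering through the positivity of the exponent $(\la-\n+1)/2$ in the far part. Your route is shorter and handles all radii $\rho\le|x|\le r_4$ at once, at the price of having to face the singularity of ${|x-y|}^{\an-1}$ on the thin shell head-on (which your split does correctly, since $\an-1>-\n$ when $\la>1$); the paper's monotone comparison avoids that absolute estimate for $t\ge1$ but pays with two separate geometric constructions. Both proofs rest on the same ingredients for the unit ball itself: the reflection argument of Lemma \ref{lemma0}~(2) (which you extend, trivially, to $|x|>1$) and compactness to get a uniform negative bound.
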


\begin{proof}
By choosing an axes with $e_1=x/|x|$, the inequality above can be reduced to 
\[
\sup_{\rho\le t\le r_4, \la\in[\la_1,\la_2]}\frac{d}{dt}
\widehat V^{(\la)}_{\Om}\left(te_1\right)<0.
\]
\eqref{1st_der_Omega} implies 
\begin{equation}\label{1st_der_t}
\frac{d}{dt}\widehat V^{(\la)}_{\Om}\left(te_1\right)
=\int_\Om {|te_1-y|}^{\an-2}(y_1-t)\,d\mu(y).
\end{equation}
The proof is divided into two cases according to whether $x$ belongs to $B^\n$ or not. Let us first prepare two estimates to be used. 

(i) Since \eqref{1st_der_t} is continuous in $\la$ and $t$, and is negative when $\Om$ is the unit ball and $0<t\le1$ by Lemma \ref{lemma0} (2), if we put 
\[
C_2=\max_{\la\in[\la_1,\la_2], t\in[\rho,1]}
\int_{B^\n} {|te_1-y|}^{\an-2}(y_1-t)\,d\mu(y)
\]
then $C_2<0$. 

There is a positive number $r_5$ $(r_5>1)$ such that 
\[
\int_{A_+(r_5,t)} {|te_1-y|}^{\an-2}(y_1-t)\,d\mu(y)<-C_2
\]
for any $\la_\in[\la_1,\la_2]$ and $t\in[\rho,1]$, where $A_+(r_5,t)$ is the part of the annulus $B^\n_{r_5}\setminus \interior B \!\!{}^\n$ on the right side of point $te_1$:
\[
A_+(r_5,t)=\{y=(y_1,\dots,y_\n)\in\RR^\n\,:\,1\le|y|\le r_5, \,y_1\ge t\}
\]
(Figure \ref{A+}). 
This can be verified by the same argument as in Lemma \ref{lemma1} (iii). 

\smallskip
(ii) Put
\[
\begin{array}{rcl}
C_3&=&\displaystyle \max_{\la_\in[\la_1,\la_2],1\le t\le2}
\int_{B^\n} {|te_1-y|}^{\an-2}(y_1-t)\,d\mu(y)<0, \\[4mm]
C_4&=&\displaystyle \max_{\la_\in[\la_1,\la_2]}\int_{B^\n_+} {|y|}^{\an-2}\,y_1\,d\mu(y)>0,
\end{array}
\]
where $B^\n_+=\{y\in B^\n\,:\,y_1\ge0\}$. 

Put
\[
k=\min\left\{\frac12\min_{\la_\in[\la_1,\la_2]}\left(\frac{-C_3}{C_4}\right)^{1/(\la-1)}, \,
\sqrt3\right\}
\quad \mbox{ and }\quad r_6=\sqrt{1+k^2}.
\]
Remark that $\la-1>\n-2\ge0$ since $\la_1>\n-1$, $r_6\le2$ since $k\le\sqrt3$, and $k^{\la-1}C_4<-C_3$ for any $\la\in[\la_1,\la_2]$. 

Put $B^\n_+(r_6,t)=\{y\in B^\n_{r_6}\,:\,y_1\ge t\}$ for $1\le t\le r_6$. 
If $1\le t\le r_6$ then $B^\n_+(r_6,t)\subset kB^\n_++t$, where $kB^\n_++t=\{y+t\,:\,y\in kB^\n_+\}$ (Figure \ref{fig3}), which implies 
\[
\begin{array}{rcl}
\displaystyle \int_{B^\n_+(r_6,t)} {|te_1-y|}^{\an-2}(y_1-t)\,d\mu(y)
&\le&\displaystyle \int_{kB^\n_++t} {|te_1-y|}^{\an-2}(y_1-t)\,d\mu(y) \\[4mm]
&=&\displaystyle  k^{\la-1}\int_{B^\n_+} {|y|}^{\an-2}\,y_1\,d\mu(y) \\[4mm]
&\le&\displaystyle k^{\la-1}C_4 \\[1mm]
&<&-C_3. 
\end{array}
\]

\smallskip
(iii) Put $r_4=\min\{r_5,r_6\}$. 
Suppose $B^\n\subset\Om\subset B^\n_{r_4}$, $\la\in[\la_1,\la_2]$ and $\rho\le t\le r_4$. 
Put 
\[
H_+(t)=\{y\in\RR^\n\,:\,y_1> t\}, \quad H_-(t)=\{y\in\RR^\n\,:\,y_1< t\}.
\]
Remark that the integrand of \eqref{1st_der_Omega} when $x=te_1$ is positive on $H_+(t)$ and negative on $H_-(t)$. 

\smallskip
(iii-a) Suppose $\rho\le t<1$. 
Since $\Om\cap H_-(t)\supset B^\n\cap H_-(t)$ and $\Om\cap H_+(t)\subset B^\n_{r_4}\cap H_+(t)$ we have 
\[
\begin{array}{rcl}
\displaystyle \frac{d}{dt}\widehat V^{(\la)}_{\Om}\left(te_1\right)
&=&\displaystyle \int_\Om {|te_1-y|}^{\an-2}(y_1-t)\,d\mu(y) \\[4mm]
&\le&\displaystyle \int_{\left(B^\n\cap H_-(t)\right) \cup (B^\n_{r_4}\cap H_+(t))} {|te_1-y|}^{\an-2}(y_1-t)\,d\mu(y) \\[4mm]
&=&\displaystyle \int_{B^\n} {|te_1-y|}^{\an-2}(y_1-t)\,d\mu(y) 
+\int_{A_+(r_4,t)} {|te_1-y|}^{\an-2}(y_1-t)\,d\mu(y) \\[4mm]
&<&0
\end{array}
\]
by the argument in (i) (Figure \ref{A+}). 

\begin{figure}[htbp]
\begin{center}
\begin{minipage}{.45\linewidth}
\begin{center}
\includegraphics[width=\linewidth]{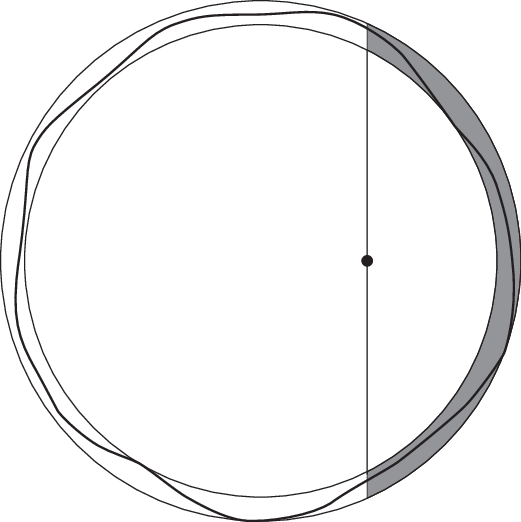}
\caption{$A_+(r_4,t)$}
\label{A+}
\end{center}
\end{minipage}
\hskip 0.4cm
\begin{minipage}{.45\linewidth}
\begin{center}
\includegraphics[width=0.5\linewidth]{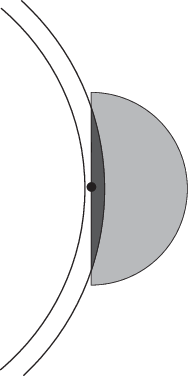}
\caption{$B^\n_+(r_6,t)$ and $kB^\n_++t$}
\label{fig3}
\end{center}
\end{minipage}
\end{center}
\end{figure}

(iii-b) Suppose $1\le t\le r_4$. 
Since $\Om\cap H_-(t)\supset B^\n$ and 
$\Om\cap H_+(t)\subset B^\n_+(r_4,t)\subset B^\n_+(r_6,t)$ 
we have 
\[
\begin{array}{rcl}
\displaystyle \frac{d}{dt}\widehat V^{(\la)}_{\Om}\left(te_1\right)
&\le&\displaystyle \int_{B^\n} {|te_1-y|}^{\an-2}(y_1-t)\,d\mu(y) 
+\int_{B^\n_+(r_6,t)} {|te_1-y|}^{\an-2}(y_1-t)\,d\mu(y)<0 
%
%
\end{array}
\]
by the argument in (ii), which completes the proof. 
\end{proof}

\begin{corollary}\label{coro3}
Assume $\la_1>\n-1$. 
If $B^\n\subset \Om\subset B^\n_{r_4}$ then no point in the complement of $B^\n_\rho$ can be an $r^{\an}$-center for any $\la$ in $[\la_1, \la_2]$, where $\rho$ is a positive number given in Lemma \ref{lemma1}. 
\end{corollary}

\begin{corollary}\label{coro4}
Put $r_2=\min\{r_3, r_4\}$. 
If $B^\n\subset \Om\subset B^\n_{r_2}$ then no point in the complement of $B^\n_\rho$ can be an $r^{\an}$-center for any $\la$ in $[\la_1, \la_2]$, where $\rho$ is a positive number given in Lemma \ref{lemma1}. 
\end{corollary}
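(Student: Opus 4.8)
The plan is to deduce Corollary~\ref{coro4} by patching together Corollaries~\ref{coro2} and~\ref{coro3}, since Lemmas~\ref{lemma2} and~\ref{lemma3} have already supplied all of the analytic content; what is left is the bookkeeping observation that the two ``one-sided'' parameter ranges $\la<\n$ and $\la>\n-1$ jointly exhaust $[\la_1,\la_2]$, which works precisely because the overlap $(\n-1,\n)$ is nonempty.

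First I would fix once and for all a real number $m$ with $\n-1<m<\n$, say $m=\n-1/2$, and argue by three cases. If $\la_2<m$ --- this covers in particular $[\la_1,\la_2]\subset(-\infty,0)$ and $\la_1=\la_2=0$ --- then $\la_2<\n$, so Lemma~\ref{lemma2} produces $r_3>1$ and Corollary~\ref{coro2} gives the conclusion with $r_2=r_3$. If $\la_1>m$, then $\la_1>\n-1$, so Lemma~\ref{lemma3} produces $r_4>1$ and Corollary~\ref{coro3} gives the conclusion with $r_2=r_4$.

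The remaining case is $\la_1\le m\le\la_2$, where I would decompose $[\la_1,\la_2]=[\la_1,m]\cup[m,\la_2]$. Since $\la_2\ge m>\n-1\ge1$, the standing trichotomy forces $[\la_1,\la_2]\subset(0,\n+1]$, so $0<\la_1\le m<\n$ and $\n-1<m\le\la_2\le\n+1$; hence Lemma~\ref{lemma2}, applied to $[\la_1,m]$, yields $r_3>1$ and Lemma~\ref{lemma3}, applied to $[m,\la_2]$, yields $r_4>1$. Setting $r_2=\min\{r_3,r_4\}>1$, any compact body $\Om$ with $B^\n\subset\Om\subset B^\n_{r_2}$ satisfies both $B^\n\subset\Om\subset B^\n_{r_3}$ and $B^\n\subset\Om\subset B^\n_{r_4}$, so Corollary~\ref{coro2} excludes an $r^{\an}$-center outside $B^\n_\rho$ for every $\la\in[\la_1,m]$ and Corollary~\ref{coro3} does the same for every $\la\in[m,\la_2]$; since these subintervals cover $[\la_1,\la_2]$, the conclusion holds for all $\la\in[\la_1,\la_2]$. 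I do not expect a genuine obstacle: the only delicate points are that $(\n-1,\n)$ is nonempty (so $m$ can be chosen) and that the radius $\rho$ used here is the single $\rho$ fixed in Lemma~\ref{lemma1} and carried through Lemmas~\ref{lemma2}, \ref{lemma3} and Corollaries~\ref{coro2}, \ref{coro3}. Together with Lemma~\ref{lemma1}, this completes the second step, and thereby the proof of Theorem~\ref{main_thm}.
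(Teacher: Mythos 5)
Your proof is correct and takes essentially the same route as the paper: reduce to the case where neither Corollary~\ref{coro2} nor Corollary~\ref{coro3} alone covers $[\la_1,\la_2]$, then split the interval inside the overlap $(\n-1,\n)$ and apply the two corollaries to the respective pieces. The only (immaterial) difference is that the paper uses the overlapping decomposition $[\la_1,\n-1/3]\cup[\n-2/3,\la_2]$ while you cut at the single point $m=\n-1/2$.
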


\begin{proof}
By Corollaries \ref{coro2} and \ref{coro3} we have only to consider the case when $\n\le\la_2\le\n+1$ and $0<\la_1\le \n-1$. 
Then $[\la_1,\la_2]=[\la_1,\n-1/3]\cup[\n-2/3,\la_2]$; the first interval is treated in Corollary \ref{coro2} and the second by Corollary \ref{coro3}. 
\end{proof}

\subsection{Uniqueness of centers of parallel bodies}
As a corollary of Theorem \ref{main_thm} we obtain
\begin{corollary}
Suppose $\pO$ is of class $C^2$. 
For any pair of real numbers $\la_1, \la_2$ $(\la_1\le\la_2)$ with either $0\not\in[\la_1,\la_2]$ or $\la_1=\la_2=0$ there is a positive number $C=C(\la_1, \la_2)$ such that $\Om_\ell$ defined by \eqref{parallel_body} 
has a unique $r^{\an}$-center for each $\la$ in $[\la_1, \la_2]$ if $\ell$ is greater than $C$ times the diameter of $\Om$ and if $\partial \Om_\ell$ is piecewise $C^1$. 
\end{corollary}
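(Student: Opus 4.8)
The plan is to deduce this corollary from Theorem~\ref{main_thm} by showing that the asphericity of the inflated body $\Om_\ell$ tends to $0$ uniformly as $\ell/\diam(\Om)\to\infty$. Concretely, set $d=\diam(\Om)$ and pick any point $p\in\Om$. Then $\Om\subset B^\n_d(p)$, hence $\Om_\ell\subset B^\n_{\ell+d}(p)$; on the other hand $p\in\Om$ gives $B^\n_\ell(p)\subset\Om_\ell$. Therefore
\[
B^\n_\ell(p)\subset\Om_\ell\subset B^\n_{(1+d/\ell)\ell}(p),
\]
so $\a(\Om_\ell)\le d/\ell$. (We may assume $\Om$ is not a single point, so $d>0$; if $\diam(\Om)=0$ the statement is vacuous or trivial.)

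Next, given $\la_1,\la_2$ as in the hypothesis, let $\e=\e(\la_1,\la_2)>0$ be the constant furnished by Theorem~\ref{main_thm}, so that any compact body with asphericity smaller than $\e$ has a unique $r^{\an}$-center for each $\la\in[\la_1,\la_2]$. Put $C=C(\la_1,\la_2)=1/\e$. If $\ell>C\,\diam(\Om)=\diam(\Om)/\e$, then by the displayed inclusion $\a(\Om_\ell)\le d/\ell<\e$. Note that $\Om_\ell$ is a compact body: it is compact and it is the closure of its interior, since every point of $\Om_\ell$ lies within distance $\ell$ of $\Om$ and small balls around interior points of $B^\n_\ell(x)$, $x\in\Om$, are contained in $\Om_\ell$. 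Hence Theorem~\ref{main_thm} applies to $\Om_\ell$ and yields that its $r^{\an}$-center is unique for each $\la\in[\la_1,\la_2]$.

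The only points requiring a little care, rather than being genuine obstacles, are (a) checking that $\Om_\ell$ really is a compact body in the sense of the paper (closure of an open set), which follows from the ball-union description in \eqref{parallel_body}, and (b) handling the degenerate case $\diam(\Om)=0$ separately. There is no hard step here: the whole content is the elementary two-sided inclusion bounding $\a(\Om_\ell)$ by $\diam(\Om)/\ell$, after which Theorem~\ref{main_thm} does all the work. I would present the argument in essentially the three short paragraphs above, spelling out the inclusion, the choice $C=1/\e(\la_1,\la_2)$, and the verification that $\Om_\ell$ satisfies the hypotheses of the main theorem.
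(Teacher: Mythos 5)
Your proof is correct and follows essentially the same route as the paper: the two-sided inclusion $B^\n_\ell(p)\subset\Om_\ell\subset B^\n_{\ell+d}(p)$ giving $\a(\Om_\ell)\le d/\ell$, followed by the choice $C=1/\e(\la_1,\la_2)$ and an appeal to Theorem~\ref{main_thm}. Your extra remarks verifying that $\Om_\ell$ is a compact body and handling the degenerate case $\diam(\Om)=0$ are reasonable additions that the paper leaves implicit.
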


We conjecture that if $\Om$ is piecewise $C^2$ then $\pO_\ell$ is piecewise $C^1$ for any $\ell>0$. 
We remark that when $\Om$ is not convex $\pO_\ell$ may not be $C^1$ even if $\Om$ is of class $C^2$. 
The anonymous reviewer suggested that the $C^2$ regularity condition might be relaxed. 

\begin{proof}
Let $\Om$ be a compact body whose diameter is $d$. 
Let $x$ be a point in the interior of $\Om$. 
Since $\{x\}\subset\Om\subset B^\n_d(x)$, for any positive number $\ell$ we have 
$B^\n_\ell(x)\subset\Om_\ell\subset B^\n_{\ell+d}(x)$, which implies 
$\ell\le r(x,\Om_\ell)\le R(x,\Om_\ell)\le \ell+d$ and hence $\a(\Om_\ell)\le d/\ell$. 
If we put $C(\la_1, \la_2)=1/\e(\la_1, \la_2)$, where $\e(\la_1, \la_2)$ is given in Theorem \ref{main_thm},  Corollary holds. 
\end{proof}

\section{Potential of a unit ball}

In this section we study the regularized potentials of a unit ball and give formulae to express them in terms of the Gauss hypergeometric functions. 
We remark that some of the content of this section has appeared in the literature (see Remark \ref{remark_literature}). 
The consideration of the case where regularization is needed for the definition of the potential is new. 

\subsection{Hypergeometric functions}
Let ${}_2F_1(a,b;c;u)$ be the Gauss hypergeometric function 
\[
{}_2F_1(a,b;c;u)=\sum_{k=0}^{\infty}\frac{(a)_k(b)_k}{k!\,(c)_k}u^k \hspace{0.5cm}(|u|<1),
\]
where $(p)_k$ is the Pochhammer symbol
\[
(p)_k=p(p+1)\dots(p+k-1)=\frac{\Gamma(p+k)}{\Gamma(p)}. 
\]
The Gauss hypergeometric function is the solution of the following hypergeometric differential equation, namely, if we put $w(u)={}_2F_1(a,b;c;u)$ then it satisfies 
\begin{equation}\label{gaussian_hypergeom_eq}
u(1-u)w''+(c-(a+b+1)u)w'-ab\,w=0.
\end{equation}
The derivative is given by 
\begin{equation}\label{hypergeom_derivative}
\frac{d}{du}{}_2F_1(a,b;c;u)=\frac{ab}{c}\,{}_2F_1(a+1,b+1;c+1;u).
\end{equation}
The Gauss summation theorem states 
\begin{equation}\label{Gauss_thm}
{}_2F_1(a,b;c;1)=\frac{\Gamma(c)\Gamma(c-a-b)}{\Gamma(c-a)\Gamma(c-b)}
\end{equation}
if $\R (c-a-b)>0$.

\subsection{Formula of the potential of a unit ball}

Put $V(t)=V^{(\la)}_{B^n}(te_1)$ and $x_t=te_1$ in what follows. 
Then \eqref{V_alpha_boundary}, \eqref{1st_der_boundary} and \eqref{f_second_partial_derivative_boundary} imply  
\begin{eqnarray}
V(t)&=&\displaystyle \frac1\la\int_{S^{n-1}}|te_1-y|^{\la-\n}\,(1-ty_1)\,d\sigma(y) \qquad(\la\ne0), \label{f_V_0}\\[1mm]
V'(t)&=&\displaystyle -\int_{S^{n-1}}|te_1-y|^{\la-\n}\,y_1\,d\sigma(y), \label{f_V_1}\\[1mm]
V''(t)&=&\displaystyle -(\la-\n)\int_{S^{n-1}}|te_1-y|^{\la-n-2}\,(t-y_1)\,y_1\,d\sigma(y)  \notag 
\end{eqnarray}
for $|t|\ne1$. Note that from \eqref{f_V_0} and \eqref{f_V_1} we obtain
\begin{equation}\label{f_int_S_r^{a-n}}
\int_{S^{n-1}}|x_t-y|^{\la-n}\,dy=\la V(t)-t\,V'(t)=\left(\la-t\,\frac{d}{dt}\right)V^{(\la)}_{B^n}(t)
\end{equation}
for $\la\ne0$ and $|t|\ne1$. 

\begin{lemma}\label{lemma_reflection_t}
If $\la\ne0$ and $|t|\ne0,1$ then the following reflection formula holds; 
\begin{equation}\label{f_reflection_potential_t}
V(t)=t^{\la-n}\left[V\left(\frac1t\right)+\frac1\la\left(t-\frac1t\right) V'\left(\frac1t\right)\right].
\end{equation}
\end{lemma}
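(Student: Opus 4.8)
The plan is to reduce the claimed reflection formula for $V(t) = V^{(\la)}_{B^n}(te_1)$ to a known inversion identity for the boundary sphere integral $\int_{S^{n-1}}|x_t-y|^{\la-n}\,d\sigma(y)$, and then to integrate. The key observation is that the map $y \mapsto y$ on $S^{n-1}$ combined with the algebraic identity $|t e_1 - y|^2 = t^2 |t^{-1}e_1 - y|^2$ (valid for $y\in S^{n-1}$, since both sides equal $t^2 - 2ty_1 + 1$) gives immediately
\[
\int_{S^{n-1}}|x_t-y|^{\la-n}\,d\sigma(y) = t^{\la-n}\int_{S^{n-1}}|x_{1/t}-y|^{\la-n}\,d\sigma(y) \qquad (|t|\ne 0,1).
\]
So first I would establish this scalar inversion identity directly from \eqref{f_V_0}–\eqref{f_V_1}. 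Next, I would invoke \eqref{f_int_S_r^{a-n}}, which expresses $\int_{S^{n-1}}|x_t-y|^{\la-n}\,d\sigma(y)$ as $\la V(t) - t V'(t)$; substituting both sides of the inversion identity through this formula yields
\[
\la V(t) - t\,V'(t) = t^{\la-n}\Bigl(\la V\!\left(\tfrac1t\right) - \tfrac1t V'\!\left(\tfrac1t\right)\Bigr).
\]

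This is one linear ODE relating $V(t)$ and $V(1/t)$, but it involves $V'(t)$, so a second relation is needed to solve for $V(t)$ alone. The natural second relation comes from differentiating the putative formula — or, more cleanly, from observing that $V(t)$ itself (not just its derivative) should satisfy a companion scaling identity coming from \eqref{f_V_0}: writing $1 - t y_1 = \tfrac1{2t}\bigl((t^2+1) - (t^2 - 2ty_1 + 1)\bigr) = \tfrac1{2t}\bigl((t^2+1) - |x_t-y|^2\bigr)$, one can split the integrand in \eqref{f_V_0} into two pieces, each of which is a pure power integral that transforms cleanly under $t\leftrightarrow 1/t$. This expresses $V(t)$ as a combination of $\int_{S^{n-1}}|x_t-y|^{\la-n}\,d\sigma$ and $\int_{S^{n-1}}|x_t-y|^{\la-n-2}\,d\sigma$, and the analogous decomposition holds for $V(1/t)$; the latter integral transforms with weight $t^{\la-n-2}$ rather than $t^{\la-n}$. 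Comparing the two decompositions and using the first-order relation above to eliminate the $|x_t-y|^{\la-n-2}$ integral should produce exactly \eqref{f_reflection_potential_t}.

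Alternatively — and this may be the shorter route — I would treat $\eqref{f_reflection_potential_t}$ as an ansatz: define $W(t) := t^{\la-n}\bigl[V(1/t) + \tfrac1\la(t - 1/t)V'(1/t)\bigr]$ and show $W \equiv V$ by checking that $W$ satisfies the same data. Since $\la V(t) - tV'(t) = \int_{S^{n-1}}|x_t-y|^{\la-n}d\sigma$ already determines $V$ up to a solution of $\la V - tV' = 0$, i.e. up to a constant multiple of $t^\la$, it suffices to verify (i) that $\la W(t) - tW'(t)$ equals the same sphere integral — this is where the scalar inversion identity is used, after a short computation of $W'$ — and (ii) that $W$ agrees with $V$ at one point, or equivalently that the $t^\la$-ambiguity is pinned down, e.g. by matching behaviour as $t\to 1$ or by a parity/symmetry argument using that $V$ is even in $t$.

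The main obstacle I anticipate is bookkeeping in step (i): computing $W'(t)$ requires differentiating a product of $t^{\la-n}$, the bracket, and $V'(1/t)$ evaluated at $1/t$ (so the chain rule contributes a $-1/t^2$ and a $V''(1/t)$ term), and one must then use the identity $\la V(s) - sV'(s) = \int_{S^{n-1}}|x_s-y|^{\la-n}d\sigma(s)$ together with its derivative to collapse everything back to the single sphere integral. The $V''$ term is the delicate one; it should cancel against the derivative of the $(t-1/t)V'(1/t)$ piece, but verifying this cancellation cleanly — rather than by brute expansion — will likely require rewriting $V''$ via \eqref{f_int_S_r^{a-n}} differentiated once, i.e. using that $\frac{d}{dt}\bigl(\la V - tV'\bigr) = (\la-1)V' - tV''$. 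Once that substitution is in place the verification should be a finite algebraic identity in $t$, $\la$, $n$, which I would then check directly.
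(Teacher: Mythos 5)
Your central input --- the identity $|te_1-y|^2=t^2\,|t^{-1}e_1-y|^2$ for $y\in S^{n-1}$, i.e.\ $|x_t-y|=|t|\,|x_{1/t}-y|$ --- is exactly the paper's key step (obtained there from the inversion in $S^{n-1}$ and similar triangles), and your first route can in principle be pushed through. But as written the proposal has a concrete error and stops short of a proof. The decomposition you propose for the ``second relation'' is wrong: $\tfrac1{2t}\bigl((t^2+1)-(t^2-2ty_1+1)\bigr)=y_1$, not $1-ty_1$. The correct identity is $1-ty_1=\tfrac12\bigl(|x_t-y|^2-(t^2-1)\bigr)$, which produces the exponent $\la-n+2$ (not $\la-n-2$); with that correction the two pure-power integrals do recombine, via $\int_{S^{n-1}}|x_s-y|^{\la-n}\,d\sigma=\la V(s)-sV'(s)$, into \eqref{f_reflection_potential_t}, but you have not carried out that computation, and your alternative ansatz route leaves its crucial cancellation (the $V''$ terms) unverified.

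More importantly, the hunt for a second relation is unnecessary, and this is where you diverge from the paper. Formula \eqref{f_V_1} already identifies the $y_1$-weighted integral: $\int_{S^{n-1}}|x_s-y|^{\la-n}\,y_1\,d\sigma(y)=-V'(s)$. So one simply writes, from \eqref{f_V_0},
\begin{equation*}
V(t)=\frac{t^{\la-n}}{\la}\int_{S^{n-1}}|x_{1/t}-y|^{\la-n}\,d\sigma(y)
-\frac{t^{\la-n+1}}{\la}\int_{S^{n-1}}|x_{1/t}-y|^{\la-n}\,y_1\,d\sigma(y),
\end{equation*}
and reads off the first integral as $\la V(1/t)-\tfrac1t V'(1/t)$ by \eqref{f_int_S_r^{a-n}} and the second as $-V'(1/t)$ by \eqref{f_V_1}; the reflection formula follows in one line. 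Your version, which converts everything to unweighted power integrals, loses the information carried by the $y_1$ weight and therefore needs to reintroduce it through a second, more delicate identity. If you adopt the direct splitting of $(1-ty_1)$ into $1$ and $-ty_1$, your argument becomes the paper's proof.
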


\begin{proof}
Let $I$ be the inversion in the unit sphere $S^{n-1}$ with center the origin. 
Then $I(y)=y$ $(y\in S^{\n-1})$ and $I(x_t)=x_{1/t}$, where $x_t=te_1$. 

\begin{figure}[htbp]
\begin{center}
\includegraphics[width=.38\linewidth]{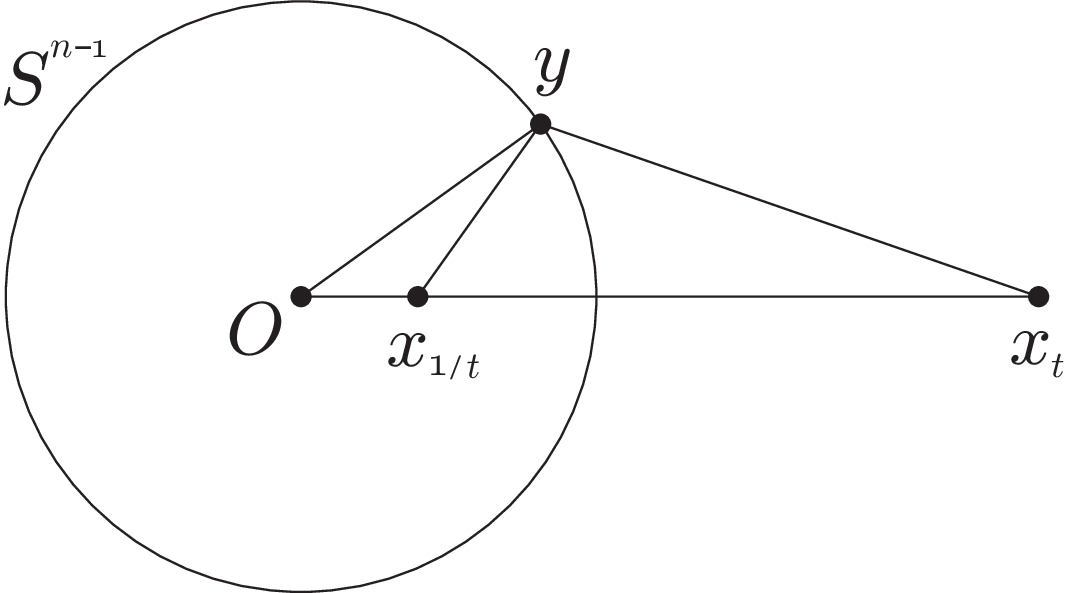}
\caption{$\angle x_tOy=\angle yOx_{1/t}$ and $|Ox_t|:|Oy|=t:1=|Oy|:|Ox_{1/t}|$}
\label{inversion}
\end{center}
\end{figure}

Since $\triangle Ox_ty$ is similar to $\triangle Oyx_{1/t}$ with similarity ration $1/t$ (Figure \ref{inversion}) we have 
\begin{equation}
|x_t-y|
=t\,|x_{1/t}-y|.  \notag
\end{equation}
Therefore, \eqref{f_V_0}, \eqref{f_V_1}, and \eqref{f_int_S_r^{a-n}} yield 
\[\begin{array}{rcl}
V(t)&=&\displaystyle \frac1\la\int_{S^{n-1}}|x_t-y|^{\la-n}(1-ty_1)\,dy\\[3mm]
&=&\displaystyle \frac{t^{\la-n}}\la\int_{S^{n-1}}|x_{1/t}-y|^{\la-n}\,dy
-\frac{t^{\la-n+1}}\la\int_{S^{n-1}}|x_{1/t}-y|^{\la-n}y_1\,dy \\[3mm]
&=&\displaystyle \frac{t^{\la-n}}\la\left[\la V\left(\frac1t\right)-\frac1t\,V'\left(\frac1t\right)\right]+\frac{t^{\la-n+1}}\la\, V'\left(\frac1t\right),
\end{array}
\]
which implies \eqref{f_reflection_potential_t}. 
\end{proof}

\begin{lemma}\label{prop_V_ball_reflection_alpha} 
If $\la\ne0$ and $|t|\ne1$ the regularized Riesz potential of a unit $n$-ball satisfies a reflection formula in $\la$;
\begin{eqnarray}
V^{(\la)}_{B^n}(t)&=&\displaystyle -{\left(1-t^2\right)}^\la V^{(-\la)}_{B^n}(t) \hspace{0.9cm}(|t|<1) \label{f_V_ball_reflection_alpha_t<1} \notag \\[2mm]
V^{(\la)}_{B^n}(t)&=&\displaystyle {\left(t^2-1\right)}^\la V^{(-\la)}_{B^n}(t) \phantom{-}\hspace{0.9cm}(|t|>1) \label{f_V_ball_reflection_alpha_t>1} \notag
\end{eqnarray}
\end{lemma}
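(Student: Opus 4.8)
The plan is to combine the reflection formula in the spatial variable, \eqref{f_reflection_potential_t}, with a judicious use of the homothety relation \eqref{homothety}, exploiting the fact that the inversion in $S^{n-1}$ sends the unit ball to the exterior of the unit ball. The key geometric observation is that for $|t|<1$ the inversion $I$ maps $B^n$ (minus the origin) onto the exterior region $\{|y|\ge 1\}$, and a Riesz potential over a region transforms under inversion into a Riesz potential over its image with a conformal weight. So the idea is: $V^{(\la)}_{B^n}(x_t)$ should, via inversion, be re-expressible through $V^{(-\la)}$ evaluated at the inverted point, with the factor $(1-t^2)^\la$ (resp. $(t^2-1)^\la$) coming from the Jacobian and the distance-scaling $|x_t-y| = t|x_{1/t}-y|$.

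Concretely, first I would fix $|t|<1$ and write $V^{(\la)}_{B^n}(x_t)$ using the boundary-integral formula \eqref{f_V_0}. The boundary integral only sees $S^{n-1}$, which is fixed pointwise by $I$, so the inversion computation done in Lemma \ref{lemma_reflection_t} is already almost everything: I would parallel that argument but now keep track of how the exponent $\la-n$ interacts with an overall power of $(1-t^2)$. The crucial algebraic input is the second-order structure — \eqref{f_int_S_r^{a-n}} expresses $\int_{S^{n-1}}|x_t-y|^{\la-n}\,dy$ in terms of $\la V(t) - tV'(t)$, i.e. in terms of the operator $\la - t\,d/dt$ applied to $V^{(\la)}_{B^n}(t)$. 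I would then observe that the same spherical integral, written with exponent $-\la-n$ in place of $\la-n$, is governed by the operator $-\la - t\,d/dt$ applied to $V^{(-\la)}_{B^n}(t)$. Matching these two expressions for what is, up to the factor $t^{2\la}$-type scaling coming from $|x_t-y|=t|x_{1/t}-y|$, the \emph{same} geometric quantity, yields an identity relating $V^{(\la)}_{B^n}$ and $V^{(-\la)}_{B^n}$. Carrying out the bookkeeping and using \eqref{f_reflection_potential_t} on both sides to eliminate the $1/t$ arguments should collapse everything to the claimed $V^{(\la)}_{B^n}(t) = -(1-t^2)^\la V^{(-\la)}_{B^n}(t)$.

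An alternative, and perhaps cleaner, route is to verify the identity directly on the level of the defining integrals together with the radial function formula $V_\Om^{(\la)}(x)=\frac1\la\int_{S^{n-1}}(\rhosub{\Om-x}(v))^\la\,d\sigma(v)$: for a point $x_t$ inside $B^n$ the radial function $\rhosub{B^n-x_t}(v)$ and the analogous "inward" radial function in the $-v$ direction are the two roots $\rho_\pm$ of the quadratic $|\rho v + x_t|^2 = 1$, whose product is $|t|^2-1$ in absolute value, i.e. $\rho_+\rho_- = 1-t^2$ with the appropriate sign convention. Then $(\rho_+(v))^{-\la}$ relates to $(\rho_-(-v))^{\la}$ through the factor $(1-t^2)^{-\la}$, and integrating over $v\in S^{n-1}$ (noting $v\mapsto -v$ is measure preserving) together with the sign coming from \eqref{V^negative} — which says $V^{(-\la)}_{B^n}$ on the interior equals minus an exterior integral — produces the sign $-1$ in the $|t|<1$ case and the absence of it in the $|t|>1$ case. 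For $|t|>1$ the same computation applies with $B^n$ replaced by its complement and the Hadamard regularization no longer needed, which accounts for the $+$ sign; the regularized quantities for $\la\le 0$ are handled by analytic continuation in $\la$ or by the finite-part versions of the integrals, both legitimate since we already have the boundary-integral representations valid for $|t|\ne 1$.

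The main obstacle I anticipate is the careful treatment of signs and of the regularization when $\la<0$ and $|t|<1$, so that $V^{(-\la)}_{B^n}$ on the left-hand case is itself a genuine convergent integral while $V^{(\la)}_{B^n}$ is a Hadamard finite part: one must check that the inversion/reflection manipulations commute with taking finite parts. The safest way to discharge this is to establish the formula first for $\la>0$ (or even $\la>n$), where all integrals converge absolutely and the boundary formulas \eqref{f_V_0}–\eqref{f_int_S_r^{a-n}} hold without caveat, and then extend to all $\la\ne 0$ by noting that both sides are real-analytic in $\la$ on $|t|\ne 1$ — this is where \eqref{f_reflection_potential_t} and the hypergeometric representations of the next subsection make the analyticity transparent.
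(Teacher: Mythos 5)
Your second (``alternative'') route is correct and genuinely different from the paper's proof, and it is the one to keep; your first sketch, built on the origin-centered inversion of Lemma \ref{lemma_reflection_t}, does not work as described, because that inversion fixes $S^{n-1}$ pointwise and only relates $V^{(\la)}(t)$ to $V^{(\la)}(1/t)$ at the \emph{same} $\la$ --- the factor $(1-t^2)^{\la}$ is the power of the point $x_t$ with respect to $S^{n-1}$, which that inversion never produces. The paper instead uses the inversion $I_t$ centered at the evaluation point $x_t$: for $|t|<1$ it quotes Corollary 2.21 of \cite{O1}, namely $V^{(\la)}_{B^n}(x_t)=-V^{(-\la)}_{(B^n)_t^\star}(x_t)$ where $(B^n)_t^\star$ is the closed complement of $I_t(B^n)$, identifies $(B^n)_t^\star$ as a ball of radius $(1-t^2)^{-1}$, and concludes with the homothety formula \eqref{homothety}; for $|t|>1$ it changes variables under $I_t$ directly (no regularization needed) and again applies \eqref{homothety}. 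Your chord-length argument encodes the same geometry --- the product of the two distances from $x_t$ to $S^{n-1}$ along a line equals $|t^2-1|$ --- but feeds it into the radial-function representation $V^{(\la)}_{\Om}(x)=\frac1\la\int_{S^{n-1}}\rho_{\Om-x}(v)^{\la}\,d\sigma(v)$, stated in the introduction for convex bodies and all $\la\ne0$: from $\rho_+(v)\rho_-(v)=1-t^2$ with $\rho_-(v)=\rho_{B^n-x_t}(-v)$ one gets $-\la V^{(-\la)}(t)=\int\rho_+^{-\la}\,d\sigma=(1-t^2)^{-\la}\int\rho_-^{\la}\,d\sigma=(1-t^2)^{-\la}\,\la V^{(\la)}(t)$, while for $|t|>1$ the two intersection distances $\rho_1<\rho_2$ over the cone of directions meeting the ball satisfy $\rho_1\rho_2=t^2-1$ and $\la V^{(\la)}(t)=\int(\rho_2^{\la}-\rho_1^{\la})\,d\sigma$, which yields the $+$ sign. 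Your version is more self-contained within this paper (no appeal to Corollary 2.21 of \cite{O1}, and no analytic continuation in $\la$ is needed once the radial-function formula is accepted for all $\la\ne0$, so your worry about finite parts is discharged automatically); the paper's version has the advantage that the interaction of the $\la\mapsto-\la$ inversion with the Hadamard regularization is already certified by the cited corollary. One small correction: the minus sign in the $|t|<1$ case comes from the prefactor $1/(-\la)$ in the radial-function formula (equivalently from \eqref{V^negative}), not from the $v\mapsto-v$ substitution, which is sign-neutral.
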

\begin{proof} 
(1) Suppose $0\le t<1$. 
Let $I_t$ be an inversion in a unit sphere with center $x_t=te_1$. Then, since 
\[
I_t(e_1)=\left(t+\frac1{1-t}\right)e_1, \> I_t(-e_1)=\left(t-\frac1{1+t}\right)e_1,
\] 
the closure of the complement of $I_t(B^n)$, which we denote by ${(B^n)}_t^\star$, is a ball with center $x_{\hat t}$ and radius $\rho$, where 
\[
\begin{array}{rcl}
\widehat t &=& \displaystyle \frac12\left[\left(t+\frac1{1-t}\right)+\left(t-\frac1{1+t}\right)\right] = t+\frac t{1-t^2}, \\[4mm]
\rho &=& \displaystyle  \frac12\left[\left(t+\frac1{1-t}\right)-\left(t-\frac1{1+t}\right)\right]
= \frac1{1-t^2}.
\end{array}
\]
By Corollary 2.21 of \cite{O1} we have 
\[
V^{(\la)}_{B^n}(x_t)=-V^{(-\la)}_{{(B^n)}_t^\star}(x_t).
\]
Let $g$ be a composition of a homothety with ratio $(1-t^2)^{-1}$ and then a translation by $x_{\hat t}$:
\[
g(x_1,\dots, x_n)=\left(\widehat t+\frac{x_1}{1-t^2},\frac{x_2}{1-t^2},\dots,\frac{x_n}{1-t^2}\right).
\]
Then ${(B^n)}_t^\star=g(B^n)$ and $x_t=g(x_{-t})$. Therefore, \eqref{homothety} implies 
\[
V^{(\la)}_{B^n}(x_t)=-V^{(-\la)}_{{(B^n)}_t^\star}(x_t)
=-\left(\frac1{1-t^2}\right)^{-\la}V^{(-\la)}_{B^n}(x_{-t})
=-{(1-t^2)}^\la V^{(-\la)}_{B^n}(x_{t}).
\]

\medskip
(2) Suppose $t>1$. Since $x_t\not\in B^n$, we do not need regularization in defining the potential $V^{(\la)}_{B^n}(x_t)$. 
Let $I_t, \widehat t$, and $\rho$ be as in (1). Put $\widetilde{B^n}=I_t(B^n)$ and $\widetilde y=I_t(y)$ ($y\in B^n$). Since
\[
|x_t-y|=\frac1{\left|x_t-\widetilde y\,\right|} \quad \mbox{ and } \quad dy=\frac1{{\left|x_t-\widetilde y\,\right|}^{2n}}\,d\widetilde y\,,
\]
we have
\[
V^{(\la)}_{B^n}(x_t) = \int_{B^n} |x_t-y|^{\la-n}\,dy = \int_{\widetilde{B^n}} {\left|x_t-\widetilde y\,\right|}^{-\la-n}\,d\widetilde y
=V^{(-\la)}_{\widetilde{B^n}}(x_t)
\]
On the other hand, since 
$\widetilde{B^n}$ is equal to the image of $B^n$ of a homothety with center $x_t$ and ratio $\rho$, 
we have 
\[
V^{(-\la)}_{\widetilde{B^n}}(x_t)={\rho}^{-\la}\,V^{(-\la)}_{B^n}(x_t)={(t^2-1)}^\la V^{(-\la)}_{B^n}(x_t),
\]
which completes the proof. 
\end{proof}

\begin{lemma}\label{lemma_hypergeom_diff_eq} 
When $\la\ne0$ and $|t|\ne1$ the regularized Riesz potential of the unit ball $V(t)=V^{(\la)}_{B^n}(t)$ satisfies the following differential equation:
\begin{equation}\label{hypgeom_diff__eq_VV'V''}
t(1-t^2)V''(t)+\left(n-1+(2\la-n-1)\,t^2\right)V'(t)-\la(\la-n)\,tV(t)=0. 
\end{equation}
\end{lemma}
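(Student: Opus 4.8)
The plan is to reduce \eqref{hypgeom_diff__eq_VV'V''} to a three-term recurrence among the spherical averages
\[
J_k(t):=\int_{S^{\n-1}}|x_t-y|^k\,d\sigma(y)\qquad(k\in\RR,\ |t|\ne1),
\]
and then to prove that recurrence from the fact that the integral of a Laplacian over the closed manifold $S^{\n-1}$ vanishes. First I would substitute into the left-hand side of \eqref{hypgeom_diff__eq_VV'V''} the three integral representations recorded above the statement: \eqref{f_V_0} for $V=V^{(\la)}_{B^\n}(t)$, \eqref{f_V_1} for $V'$, and the displayed formula $V''(t)=-(\la-\n)\int_{S^{\n-1}}|x_t-y|^{\an-2}(t-y_1)y_1\,d\sigma(y)$. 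Using the identity $2ty_1=1+t^2-|x_t-y|^2$ valid for $y\in S^{\n-1}$, I would turn each integrand into a linear combination of the pure powers $|x_t-y|^{\an-2}$, $|x_t-y|^{\an}$, $|x_t-y|^{\an+2}$; collecting terms, the left-hand side of \eqref{hypgeom_diff__eq_VV'V''} should become, for $t\ne0$,
\[
\frac{1+t^2}{4t}\,\Big[(\la-\n)(1-t^2)^2J_{\an-2}-2(\la-1)(1+t^2)J_{\an}+(\la+\n-2)J_{\an+2}\Big],
\]
so that the lemma is equivalent to the vanishing of the bracket.

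The essential step is then the identity
\[
(\la-\n)(1-t^2)^2J_{\an-2}-2(\la-1)(1+t^2)J_{\an}+(\la+\n-2)J_{\an+2}=0 .
\]
To prove it, note that for $|t|\ne1$ and any real $k$ the zonal function $y\mapsto|x_t-y|^k$ is smooth on $S^{\n-1}$, hence $\int_{S^{\n-1}}\Delta_{S^{\n-1}}\big(|x_t-y|^k\big)\,d\sigma(y)=0$, where $\Delta_{S^{\n-1}}$ is the Laplace--Beltrami operator. Writing $|x_t-y|^2=1+t^2-2t\cos\theta$ with $\theta$ the angle between $e_1$ and $y$ and computing the spherical Laplacian of a function of $\theta$ alone, one gets
\[
\Delta_{S^{\n-1}}\big(|x_t-y|^k\big)=kt(\n-1)\,y_1\,|x_t-y|^{k-2}+kt^2(k-2)(1-y_1^2)\,|x_t-y|^{k-4}.
\]
Re-expressing $y_1$ and $1-y_1^2$ once more through $2ty_1=1+t^2-|x_t-y|^2$, integrating over $S^{\n-1}$, and dividing by $k$ (assume $k\ne0$), one obtains
\[
2(\n-1)\big[(1+t^2)J_{k-2}-J_k\big]+(k-2)\big[-(1-t^2)^2J_{k-4}+2(1+t^2)J_{k-2}-J_k\big]=0 .
\]
Taking $k=\an+2$ and simplifying with $2(\n-1)+2(\la-\n)=2(\la-1)$ and $2(\n-1)+(\la-\n)=\la+\n-2$ reproduces exactly the bracket above; it therefore vanishes, and \eqref{hypgeom_diff__eq_VV'V''} follows.

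Three loose ends remain. The case $t=0$ is immediate, since all three terms of \eqref{hypgeom_diff__eq_VV'V''} vanish there ($V'(0)=-\int_{S^{\n-1}}y_1\,d\sigma=0$ by symmetry). The value $\an+2=0$, i.e.\ $\la=\n-2$, is not reached by the recurrence; there I would argue by analytic continuation, since the coefficients of \eqref{hypgeom_diff__eq_VV'V''} are polynomials in $\la$ while $V^{(\la)}_{B^\n}(t)$ and its $t$-derivatives are, for fixed $|t|\ne1$, analytic in $\la$ on $\{\la\ne0\}$, so that the equation, once known on $\{\la\ne0,\ \la\ne\n-2\}$, extends to all $\la\ne0$. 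Finally, I expect the genuine obstacle to be conceptual rather than computational: the relations one reads off straight from the definitions — those expressing $J_{\an}$, $J_{\an+2}$ and $J_{\an-2}$ through $V$, $V'$, $V''$ — turn out to be mutually dependent with \eqref{hypgeom_diff__eq_VV'V''}, so feeding them back in yields only a tautology; the one extra ingredient that breaks the circularity, and the only place where the dimension $\n$ enters anew, is $\int_{S^{\n-1}}\Delta_{S^{\n-1}}(\,\cdot\,)\,d\sigma=0$. The rest is bookkeeping.
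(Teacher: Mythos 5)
Your argument is correct, and I checked the computations: with $J_k(t)=\int_{S^{\n-1}}|x_t-y|^k\,d\sigma(y)$ and the substitution $2ty_1=1+t^2-|x_t-y|^2$, the left-hand side of \eqref{hypgeom_diff__eq_VV'V''} does collapse to $\frac{1+t^2}{4t}\bigl[(\la-\n)(1-t^2)^2J_{\an-2}-2(\la-1)(1+t^2)J_{\an}+(\la+\n-2)J_{\an+2}\bigr]$, your formula for $\Delta_{S^{\n-1}}|x_t-y|^k$ is the correct one for a zonal function ($g''(\theta)+(\n-2)\cot\theta\,g'(\theta)$), and setting $k=\an+2$ in the resulting three-term relation kills the bracket. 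Your diagnosis of the potential circularity is also right: the representations of $V,V',V''$ alone determine the three $J$'s and yield no relation, so the vanishing of $\int_{S^{\n-1}}\Delta_{S^{\n-1}}(\cdot)\,d\sigma$ is genuinely the extra input. However, this is not the paper's route. The paper derives \eqref{hypgeom_diff__eq_VV'V''} from the inversion symmetry of the sphere: it takes the reflection formula \eqref{f_reflection_potential_t} of Lemma \ref{lemma_reflection_t}, substitutes $t\mapsto 1/t$ to extract the first-order relation $V'(t)=t^{\la-\n}V'(1/t)$ (equation \eqref{V'(t)<->V'(1/t)}), then differentiates \eqref{f_reflection_potential_t} once and eliminates the values at $1/t$; the ODE drops out. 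That argument is shorter here because the reflection formula is already established and reused elsewhere (e.g.\ in the proof of \eqref{V_hypergeom_down}), and it makes the $t\mapsto1/t$ symmetry of the equation transparent. Your contiguity argument is self-contained (no inversion needed), works uniformly in $|t|<1$ and $|t|>1$, and explains structurally why the equation is of hypergeometric type; its only costs are the heavier bookkeeping and the excluded value $k=\an+2=0$, which your analytic-continuation patch handles correctly since for fixed $|t|\ne1$ the boundary-integral representations \eqref{f_V_0} and \eqref{f_V_1} are visibly analytic in $\la$ away from $\la=0$ (alternatively, the three-term relation itself extends to $k=0$ by continuity of $k\mapsto J_k$). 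Either way the lemma is proved.
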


\begin{proof}
The equation \eqref{hypgeom_diff__eq_VV'V''} holds for $t=0$ since $V'(0)=0$ by symmetry. 
Therefore we may assume $t>0$. 

Substitution of $t$ in \eqref{f_reflection_potential_t} by $1/t$ yields
\[
V\left(\frac1t\right)=t^{-\la+n}\left[V(t)+\frac1\la\left(\frac1t-t\right)V'(t)\right].
\]
On the other hand, transposition of $V(\frac1t)$ in \eqref{f_reflection_potential_t} yields
\[
V\left(\frac1t\right)=t^{-\la+n}\,V(t)-\frac1\la\left(t-\frac1t\right)V'\!\left(\frac1t\right). 
\]
Therefore 
\begin{equation}\label{V'(t)<->V'(1/t)}
V'(t)=t^{\la-n}\,V'\!\left(\frac1t\right).
\end{equation}
Differentiation of \eqref{f_reflection_potential_t} by $t$ yields 
\begin{eqnarray}
V'(t)&=&\displaystyle (\la-n)t^{\la-n-1}\left[V\left(\frac1t\right)+\frac1\la\left(t-\frac1t\right)V'\!\left(\frac1t\right)\right] \nonumber \\
&&+t^{\la-n}\left[-\frac1{t^2}\,V'\!\left(\frac1t\right)
+\frac1\la\left(1+\frac1{t^2}\right)V'\!\left(\frac1t\right)
+\frac1\la\left(t-\frac1t\right)\left(-\frac1{t^2}\right)V''\!\left(\frac1t\right)
\right] \nonumber \\
&=&\displaystyle \frac{t^{\la-n-2}}{\la}\left\{ -\left(t-\frac1t\right)V''\!\left(\frac1t\right)
+\left[(\la-n)(t^2-1)-\la+(t^2+1)\right]V'\!\left(\frac1t\right) \right. \nonumber \\[1mm]
&& \displaystyle \left. \phantom{\frac{t^{\la-n-2}}{\la}+}+\la(\la-n)\,tV\left(\frac1t\right)
\right\} \nonumber 
\end{eqnarray}
Subsituting \eqref{V'(t)<->V'(1/t)} to the left hand side above and putting $s=1/t$, we obtain 
\[
\la V'(s)=s^2\left\{-\left(\frac1s-s\right)V''(s)+\left[(\la-n)\left(\frac1{s^2}-1\right)-\la+\frac1{s^2}+1\right]V'(s)+\la(\la-n)\frac1sV(s)\right\},
\]
which is equivalent to \eqref{hypgeom_diff__eq_VV'V''}. 

\end{proof}

\begin{theorem}\label{potential_ball}
Let ${V^{(\la)}_{B^n}}(t)$ be the regularized Riesz potential of the unit $n$-ball at a point $x_t=te_1$. When $|t|\ne1$ or $\la>0$ it is given as follows. 
\begin{enumerate}
\item {\rm (cf. \cite{BIK,KM,RS,T})} When $|t|\ne1$ and $\la\ne0$ 
\begin{numcases}
{V^{(\la)}_{B^n}(t)=}
  \frac{\AS}\la\, {}_2F_1\left(-\frac\la 2, -\frac{\la-\n}2;\,\frac n2;\,t^2\right)
   & $(|t|<1)$ \label{V_hypergeom_up}\\ 
 \frac{\AS}n\, t^{\la-\n}\,{}_2F_1\left(-\frac\la 2+1, -\frac{\la-\n}2;\,\frac n2+1;\,\frac1{t^2}\right) 
   & $(|t|>1)$ \label{V_hypergeom_down}
\end{numcases}
where $\AS$ is the volume of the unit $(n-1)$-sphere. 
\item When $|t|\ne1$ and $\la=0$ 
\begin{numcases}
{V^{(0)}_{B^n}(t)=}
{\AS} \, \log\sqrt{1-t^2} 
 & $(|t|<1)$ \label{V^0_up} \\
\AS\biggl(\frac12\log\frac{t+1}{t-1}-\sum_{j=1}^{(n-1)/2}\frac1{2j-1}\,t^{-2j+1}
\biggr) & $(|t|>1, \, n:\mbox{odd })$ \label{V^0_t>1_odd} \\
\AS\biggl(\frac12\log\left(1-\frac1{t^2}\right)-\sum_{j=1}^{n/2-1}\frac1{2j}\,t^{-2j}\biggr) & $(|t|>1 \, n:\mbox{even })$ \label{V^0_t>1_odd} 
\end{numcases}
\item When $|t|=1$ and $\la>0$ 
\begin{equation} \label{V(1)}
V^{(\la)}_{B^n}(1)
=\frac{2^{\la-1}}\la\,\sigma_{n-2}B\left(\frac{\la+1}2,\frac{n-1}2\right)
=\frac{2^\la}\la\,\frac{\Gamma\left(\frac{\la+1}2\right)}{\Gamma\left(\frac{\la+n}2\right)}\,\pi^{\frac{n-1}2}. \notag
\end{equation}
\end{enumerate}
\end{theorem}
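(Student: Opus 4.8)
The plan is to extract all three parts from the differential equation \eqref{hypgeom_diff__eq_VV'V''} of Lemma~\ref{lemma_hypergeom_diff_eq}, together with the defining integral and \eqref{V^negative}, reducing everything to standard facts about ${}_2F_1$. \emph{Case $|t|<1$ of (1).} Since $V=V^{(\la)}_{B^\n}$ is smooth and even in $t$, the change of variables $u=t^2$ is legitimate, and it brings \eqref{hypgeom_diff__eq_VV'V''} to the Gauss equation \eqref{gaussian_hypergeom_eq} with $a=-\la/2$, $b=-(\la-\n)/2$, $c=\n/2$. The indicial exponents at $u=0$ are $0$ and $1-\n/2\le 0$, so every solution analytic at $u=0$, in particular $V$, is a scalar multiple of ${}_2F_1(-\la/2,-(\la-\n)/2;\n/2;u)$ (this persists when $\n$ is even, since the companion solution then carries a negative power of $u$ or a $\log u$). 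The scalar equals $V^{(\la)}_{B^\n}(0)$, which is $\AS\int_0^1 r^{\la-1}\,dr=\AS/\la$ when $\la>0$ and, by \eqref{V^negative}, $-\AS\int_1^\infty r^{\la-1}\,dr=\AS/\la$ when $\la<0$; this gives \eqref{V_hypergeom_up}.

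\emph{Case $|t|>1$ of (1).} Here $x_t\notin B^\n$, so $V(t)=\int_{B^\n}|te_1-y|^{\la-\n}\,dy$ is analytic on $(1,\infty)$, and expanding $|te_1-y|^{\la-\n}=t^{\la-\n}(1-2y_1/t+|y|^2/t^2)^{(\la-\n)/2}$ for large $t$ and integrating term by term gives $V(t)=t^{\la-\n}\big(\AS/\n+O(t^{-2})\big)$ with only even powers of $1/t$; hence $W(v):=t^{\n-\la}V(t)$, $v=1/t^2$, is analytic in a neighbourhood of $v=0$ with $W(0)=\AS/\n={\rm Vol}(B^\n)$. Substituting $V(t)=t^{\la-\n}W(1/t^2)$ into \eqref{hypgeom_diff__eq_VV'V''} shows $W$ satisfies the hypergeometric equation with parameters $-\la/2+1$, $-(\la-\n)/2$, $\n/2+1$, whose companion solution behaves like $v^{-\n/2}$ and so is unbounded as $t\to\infty$; therefore $W$ is the analytic solution, $W(v)=(\AS/\n)\,{}_2F_1(-\la/2+1,-(\la-\n)/2;\n/2+1;v)$, i.e.\ \eqref{V_hypergeom_down}. (Alternatively one can feed the already-proved \eqref{V_hypergeom_up} and its $t$-derivative into the reflection formula \eqref{f_reflection_potential_t} of Lemma~\ref{lemma_reflection_t} and collapse the result using \eqref{hypergeom_derivative} and a standard contiguous relation.)

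\emph{Part (2), the case $\la=0$.} For $|t|>1$ no regularization is needed, so $V^{(0)}_{B^\n}(te_1)$ is obtained by setting $\la=0$ in \eqref{V_hypergeom_down}; since ${}_2F_1(1,\n/2;\n/2+1;v)=\n\sum_{k\ge 0}v^{k}/(\n+2k)$, the resulting series is elementary, and peeling off the first $(\n-1)/2$ (resp.\ $\n/2-1$) terms and summing the tail via $\sum_{k\ge 1}v^{2k-1}/(2k-1)=\tfrac12\log\tfrac{1+v}{1-v}$ (resp.\ $\sum_{k\ge 1}v^{k}/k=-\log(1-v)$) produces the formulas for $\n$ odd and $\n$ even. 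For $|t|<1$ the finite part is recovered from the relation $V^{(0)}_{B^\n}(x)=\lim_{\la\to 0}\big(V^{(\la)}_{B^\n}(x)-\AS/\la\big)$, which follows by comparing the definitions \eqref{def_potential_la=0_interior} and \eqref{def_potential_la<0_interior}; applied to \eqref{V_hypergeom_up}, and using $(-\la/2)_k=-\tfrac\la2(k-1)!+O(\la^2)$ so that ${}_2F_1(-\la/2,-(\la-\n)/2;\n/2;t^2)-1=\tfrac\la2\log(1-t^2)+O(\la^2)$, it yields \eqref{V^0_up}.

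\emph{Part (3), the case $|t|=1$, $\la>0$, and the main obstacle.} Since $\la>0$ we have $c-a-b=\la>0$, so the series ${}_2F_1(-\la/2,-(\la-\n)/2;\n/2;u)$ converges at $u=1$; by Abel's theorem and the continuity of $V^{(\la)}_{B^\n}$ on $\RR^\n$ for $\la>0$ we may let $t\to 1^-$ in \eqref{V_hypergeom_up}, and \eqref{Gauss_thm} evaluates the limit as $\Gamma(\n/2)\Gamma(\la)/\big(\Gamma((\n+\la)/2)\Gamma(\la/2)\big)$. Inserting $\AS=2\pi^{\n/2}/\Gamma(\n/2)$ and Legendre's duplication formula $\Gamma(\la)=2^{\la-1}\pi^{-1/2}\Gamma(\la/2)\Gamma((\la+1)/2)$ gives the stated closed form; the equivalent expression with $\sigma_{n-2}$ also drops out of a direct evaluation of $\int_{B^\n}|e_1-y|^{\la-\n}\,dy$ (substitute $z=e_1-y$, pass to polar coordinates, and recognise a Beta integral). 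The one genuine difficulty throughout is the uniqueness-of-solution step for $|t|>1$ and the $\la\to0$ limit for $|t|<1$: one must ensure the analytic-solution argument survives the resonance of the indicial exponents when $\n$ is even, and that the term subtracted in $\lim_{\la\to 0}(V^{(\la)}_{B^\n}-\AS/\la)$ is exactly the one dictated by the definition; both are handled by using the integral representation to control the behaviour of $V$ at the relevant singular point ($t=\infty$, respectively $\la=0$).
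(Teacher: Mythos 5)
Your proposal is correct, and it reaches all three parts, but the route through part (1) for $|t|>1$ and part (2) for $|t|>1$ is genuinely different from the paper's. For $|t|<1$ the paper also reduces to the Gauss equation via $u=t^2$, but it pins down $V$ by checking that the candidate $H(t)=F(t^2)$ satisfies the same ODE \eqref{hypgeom_diff__eq_VV'V''} and matching the two data $V(0)=H(0)=\AS/\la$, $V'(0)=H'(0)=0$; your Frobenius-type argument (indicial exponents $0$ and $1-\n/2$ at $u=0$, boundedness of $V$ excluding the companion solution even in the resonant case $\n=2$) is if anything the cleaner way to justify uniqueness at the singular point $t=0$, at the cost of having to know $V$ is analytic (even) in $t^2$ there. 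For $|t|>1$ the paper does \emph{not} argue via the ODE at infinity: it substitutes \eqref{V_hypergeom_up} into the reflection formula \eqref{f_reflection_potential_t} of Lemma \ref{lemma_reflection_t} and collapses the resulting combination of two hypergeometric series by an explicit computation of the coefficients $c_k(\la,n)$; your primary argument (expand the unregularized integral for large $t$, observe only even powers of $1/t$ survive, identify $W(v)=t^{\n-\la}V(t)$ as the bounded solution of the transformed hypergeometric equation with $W(0)=\mathrm{Vol}(B^\n)=\AS/\n$) avoids that series manipulation entirely, and you correctly flag the paper's route as the alternative. In part (2) for $|t|<1$ both proofs rest on the relation $V^{(0)}_{B^\n}=\lim_{\la\to0}(V^{(\la)}_{B^\n}-\AS/\la)$ followed by the same first-order expansion of the Pochhammer symbols; the paper derives that relation from the boundary-integral formula \eqref{V0_boundary} together with the divergence-theorem identity \eqref{f_integral_sigma_or_0} rather than from the volume-integral definitions, which sidesteps the interchange of the limits $\e\to0$ and $\la\to0$ that your version implicitly requires. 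For $|t|>1$ with $\la=0$ you sum the series $\AS\sum_j t^{-(\n+2j)}/(\n+2j)$ directly, where the paper differentiates and re-integrates; and part (3) (Abel/continuity, Gauss's theorem \eqref{Gauss_thm}, Legendre duplication) is identical. No essential gap; the only points deserving a sentence of justification are the analyticity of $V$ in $t^2$ near $0$ and the analytic continuation of the $|t|>1$ identity from a neighbourhood of $t=\infty$ to all of $(1,\infty)$, both of which are standard.
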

\begin{remark}\rm 
We remark that if $|t|<1$ then $\displaystyle \lim_{\la\to\pm0}V_{B^n}^{(\la)}(t)=\pm\infty$ (\cite{O1} Proposition 2.16). 

When $\la\le0$, the limit of $V^{(\la)}_{B^n}(t)$ as $t$ approaches $\pm1$ does not exist. 
It follows from Lemma 2.13 of \cite{O1} that states if $\la\le0$ then 
$\displaystyle \lim_{t\uparrow 1}V_{B^n}^{(\la)}(t)=-\infty$ and $\displaystyle \lim_{t\downarrow 1}V_{B^n}^{(\la)}(t)=\infty$. 
One can apply the regularization using analytic continuation (cf. \cite{OS2}) to $V^{(\la)}_{B^n}(1)$, although we omit an explicit formula. 
\end{remark}

\begin{remark}\label{remark_literature}
\rm 
In some cases when $\la>0$ or $x\in \Om^c$, i.e. when the potential can be defined without regularization as in \eqref{def_potential_la=0_interior} and \eqref{def_potential_la<0_interior}, the formulae \eqref{V_hypergeom_up} and \eqref{V_hypergeom_down} have already appeared in some literatures (the list may not be complete).

\begin{itemize}
\item The case $n=2$ and $0<\la<2$ is given in \cite{KM} Lemma 3.8. 

\item The case $|t|<1$ and $0<\la<1$ is given in \cite{RS} Theorem 4.1. 

\item The case $|t|>1$ is given in \cite{T}, where Tkachev used the inversion in a unit sphere and derived the essentially same differential equation. 

\item The case $0<\la<2$ follows from \cite{BIK} Lemma 2.4, where the Riesz potential of $(1-|y|^2)_+^{\frac\gamma2}$ for $\gamma>0$ was studied, by putting $\gamma=0$. 
\end{itemize}
\end{remark}

\begin{proof}
Put $V(t)={V^{(\la)}_{B^n}}(te_1)$. 

(1) Assume $t\ne\pm1$ and $\la\ne0$. 

(i) Suppose $|t|<1$. 
Put 
\[
F(u)=\frac\AS\la\,{}_2F_1\left(-\frac\la 2, -\frac{\la-n}2;\,\frac n2;\,u\right)
\quad \mbox{ and }\quad
H(t)=F\big(t^2\big). 
\]
Substitution of $a=-\la/2, b=-(\la-n)/2, c=n/2$ and $u=t^2$ to \eqref{gaussian_hypergeom_eq} yields 
\[
\begin{array}{rcl}
0&=&\displaystyle u(1-u)F''(u)+\left\{\frac n2-\left[-\frac\la2-\frac{\la-n}2+1\right]u\right\}F'(u)-\left(-\frac\la2\right)\left(-\frac{\la-n}2\right)F(u)\\[3mm]
&=&\displaystyle t^2(1-t^2)\frac1{4{}\,t^2}\left(H''(t)-\frac1tH'(t)\right)
+\left\{\frac n2+\frac{2\la-n-2}2\,t^2\right\}\frac{1}{2{}\,t}H'(t) 
-\frac{\la(\la-n)}4\frac{}{}H(t) \\[3mm]
&=&\displaystyle \frac1{4{}\,t}\Big[t(1-t^2)H''(t)+\left(n-1+(2\la-n-1)\,t^2\right)H'(t)-\la(\la-n)\,tH(t)\Big]
\end{array}
\]
for $t\ne0$. 
When $t=0$ we have $H'(0)=0$. 
It follows that $H$ satisfies the same differential equation as \eqref{hypgeom_diff__eq_VV'V''} for $V$. 
Since
\[
V(0)=H(0)=\frac{\AS}\la,\>\>V'(0)=H'(0)=0, 
\]
it follows that $V(t)=H(t)$ for any $t$ with $|t|<1$.

\smallskip
(ii) Assume $|t|>1$. 
Since $|\,1/t\,|<1$, by substituting (\ref{V_hypergeom_up}) and \eqref{hypergeom_derivative} to \eqref{f_reflection_potential_t} we obtain 
\[
\begin{array}{rcl}
\displaystyle \frac{V(t)}{\AS}
&=&\displaystyle \frac{t^{\la-n}}\la {}_2F_1\left(-\frac\la 2, -\frac{\la-n}2;\frac n2;\frac1{t^2}\right) \\[3mm]
&& \displaystyle +\frac{\la-n}{\la n}t^{\la-n-2}\left(t^2-1\right)\, {}_2F_1\left(-\frac\la 2+1, -\frac{\la-n}2+1;\frac n2+1;\frac1{t^2}\right)\\[3mm]
&=&\displaystyle \frac{t^{\la-n-2}}{\la n}\left[ nt^2\sum_{i=0}^{\infty}
\frac{{\left(-\frac\la2\right)}_i{\left(-\frac{\la-n}2\right)}_i}{i!\,{\left(\frac n2\right)}_i}\,t^{-2i}
+(\la-n)\left(t^2-1\right)\sum_{j=0}^{\infty}
\frac{{\left(-\frac\la2+1\right)}_j{\left(-\frac{\la-n}2+1\right)}_j}{j!\,{\left(\frac n2+1\right)}_j}\,t^{-2j}
\right] \\[4mm]
%
&=&\displaystyle \frac{t^{\la-n}}{\la n}\left[\la+\sum_{k=1}^{\infty} c_k(\la,n)\,t^{-2k}\right],
\end{array}
\]
where 
\[
\begin{array}{rcl}
c_k(\la,n)&=&\displaystyle n\,\frac{{\left(-\frac\la2\right)}_k{\left(-\frac{\la-n}2\right)}_k}{k!\,{\left(\frac n2\right)}_k}
+(\la-n)\frac{{\left(-\frac\la2+1\right)}_k{\left(-\frac{\la-n}2+1\right)}_k}{k!\,{\left(\frac n2+1\right)}_k} \\[4mm]
&&
\displaystyle -(\la-n)\frac{{\left(-\frac\la2+1\right)}_{k-1}{\left(-\frac{\la-n}2+1\right)}_{k-1}}{(k-1)!\,{\left(\frac n2+1\right)}_{k-1}}\\[5mm]
&=& \displaystyle \frac{{\left(-\frac\la2+1\right)}_{k-1}{\left(-\frac{\la-n}2+1\right)}_{k-1}}{k!\,{\left(\frac n2\right)}_{k+1}} \cdot \frac{\la(\la-2k)(\la-n)n}8  \\ [4mm]
%
%
&=& \displaystyle \la\, \frac{{\left(-\frac\la2+1\right)}_k{\left(-\frac{\la-n}2\right)}_k}{k!\,{\left(\frac n2+1\right)}_k}\,. 
\end{array}
\]
It follows that 
\[
\frac{V(t)}{\AS}=\frac{t^{\la-n}}n\, {}_2F_1\left(-\frac\la 2+1, -\frac{\la-n}2;\,\frac n2+1;\,\frac1{t^2}\right),
\]
which completes the proof of \eqref{V_hypergeom_down}. 

\medskip
(2) 
Let $x\not\in\pO$ 
and $y\in S^{n-1}$. 
Since 
\[
\log |x-y|=\lim_{\la\to0}\frac{{|x-y|}^\la-1}\la,
\]
\eqref{V0_boundary} implies
\begin{eqnarray}
V^{(0)}_{B^n}(x)&=&\displaystyle \int_{S^{n-1}}\frac{\log|x-y|}{{|x-y|}^n}\,(y-x)\cdot \nu\,dy \notag \\
&=&\displaystyle \lim_{\la\to0}\left(
\frac1\la\int_{S^{n-1}}\frac{{|x-y|}^\la}{{|x-y|}^n}\,(y-x)\cdot \nu\,dy 
-\frac1\la\int_{S^{n-1}}\frac{(y-x)\cdot \nu}{{|x-y|}^n}\,dy 
\right). \label{V0_ball}
\end{eqnarray}

(i) Suppose $|t|<1$. 
As 
\begin{equation}\label{div_zero}
\mbox{div}_y\left({|y-x|}^{-n}(y-x)\right)=0, 
\end{equation}
for sufficiently small $\e>0$ we have 
\begin{eqnarray}
0&=&\displaystyle \int_{B^n\setminus B^n_\e(x)}\mbox{div}_y\left(\frac{y-x}{{|x-y|}^n}\right)dy \nonumber \\
&=&\displaystyle \int_{S^{n-1}}\frac{(y-x)\cdot \nu}{{|x-y|}^n}\,dy
-\int_{S^{n-1}_\e(x)}\frac{(y-x)\cdot \nu}{{|x-y|}^n}\,dy \nonumber \\
&=&\displaystyle \int_{S^{n-1}}\frac{(y-x)\cdot \nu}{{|x-y|}^n}\,dy \label{f_integral_sigma_or_0}
-\AS.\end{eqnarray}

By \eqref{V_alpha_boundary}, \eqref{V0_ball}, \eqref{f_integral_sigma_or_0}, and (\ref{V_hypergeom_up}) 
\[
\begin{array}{rcl}
V^{(0)}_{B^n}(t)&=&\displaystyle \lim_{\la\to0}\left(V^{(\la)}_{B^n}(x_t)-\frac\AS\la\right) \\[4mm]
&=&\displaystyle \lim_{\la\to0}\frac\AS\la\left({}_2F_1\left(-\frac\la2,\,-\frac{\la-n}2;\,\frac n2;\,t^2\right)-1 \right) \\[4mm]
&=&\displaystyle -\frac\AS2\lim_{\la\to0}\,\sum_{j=1}^{\infty}\frac{\left(-\frac\la2+1\right)_{j-1}\left(-\frac{\la-n}2\right)_j}{j!\left(\frac n2\right)_j}\,t^{2j} \\[4mm]
&=&\displaystyle -\frac\AS2\sum_{j=1}^{\infty}\frac{(j-1)!}{j!}\,t^{2j} \\[5mm]
%
%
&=&\displaystyle \frac\AS2\log\left(1-t^2\right).
\end{array}
\]

\medskip
(ii) Suppose $|t|>1$. 
Since \eqref{div_zero} implies that the second term of \eqref{V0_ball} vanishes, 
\[
V^{(0)}_{B^n}(t)= \lim_{\la\to0} V^{(\la)}_{B^n}(x_t) = \frac{\AS}{n\,t^n}\, {}_2F_1\left(1, \frac{n}2;\,\frac n2+1;\,\frac1{t^2}\right)
=\AS\sum_{j=0}^\infty \frac1{\n+2j}\,t^{-(\n+2j)},
\]
which implies  
\[
\left(\frac1\AS V^{(0)}_{B^n}(t)\right)'=-\frac{t^{-n-1}}{1-t^{-2}}\,.
\]
Now the conclusion follows from 
\[
\left(\frac12\log\frac{t+1}{t-1}\right)'=-\frac{t^{-2}}{1-t^{-2}} \>\>\mbox{ and }\>\>
\left(\frac12\log\left(1-\frac1{t^2}\right)\right)'=\frac{t^{-3}}{1-t^{-2}}. 
\]

\medskip
(3) Suppose $\la>0$. Then $V_{B^\n}^{(\la)}(x_t)$ is continuous with respect to $t$. 
Since
\[
\frac n2-\left(-\frac\la 2\right)-\left(-\frac{\la-n}2\right)
=\frac n2+1-\left(-\frac\la 2+1\right)-\left(-\frac{\la-n}2\right)
=\la>0,
\]
\eqref{V_hypergeom_up}  
and the Gauss theorem \eqref{Gauss_thm} implies that  
\[
V^{(\la)}_{B^n}(1)=\frac{\AS}\la\, {}_2F_1\left(-\frac\la 2, -\frac{\la-n}2;\,\frac n2;\,1\right)
=\frac\AS\la \frac{\Gamma\left(\frac n2\right)\Gamma(\la)}{\Gamma\left(\frac{\la+n}2\right)\Gamma\left(\frac\la2\right)}
=\frac\AS2 \frac{\Gamma\left(\frac n2\right)\Gamma(\la)}{\Gamma\left(\frac{\la+n}2\right)\Gamma\left(\frac\la2+1\right)}.
\]
Legendre's duplication formula states 
\begin{equation}\label{Legendre_duplication_f}
\sqrt\pi \, \Gamma(2z)=2^{2z-1}\Gamma(z)\Gamma\left(z+\frac12\right),  \notag
\end{equation}
which, substituting $z=(\la+1)/2$, yields 
\[
\sqrt\pi\,\Gamma(\la+1)=2^{\la}\Gamma\left(\frac{\la+1}2\right)\Gamma\left(\frac\la2+1\right),
\]
which, together with $\AS=2\pi^{\n/2}/\Gamma(\n/2)$, implies 
\[
V^{(\la)}_{B^n}(1)
=\frac\AS2 \frac{\Gamma\left(\frac n2\right)\Gamma(\la)}{\Gamma\left(\frac{\la+n}2\right)\Gamma\left(\frac\la2+1\right)}
=\frac{\Gamma(\la)}{\Gamma\left(\frac{\la+n}2\right)\Gamma\left(\frac\la2+1\right)}\,\pi^{\frac n2}
=\frac{2^\la}\la\,\frac{\Gamma\left(\frac{\la+1}2\right)}{\Gamma\left(\frac{\la+n}2\right)}\,\pi^{\frac{n-1}2}.
\]

\end{proof}

\begin{proposition}\label{prop_elementary_ball}
The regularized Riesz potential of the unit $n$-ball at point $x_t=te_1$, $V^{(\la)}_{B^n}(t)$ $(|t|\ne1)$ can be expressed by elementary functions if at least one of the following conditions is satisfied. 
\begin{enumerate}
\item $\la$ is an even integer. 
\item $n$ is odd. 
\end{enumerate}
\end{proposition}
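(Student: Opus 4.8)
The plan is to reduce everything to the explicit expressions of Theorem~\ref{potential_ball} and to exploit two features of the Gauss hypergeometric function: that its series terminates to a polynomial when a numerator parameter is a non-positive integer, and the reflection formula in $\la$. First I would dispose of $\la=0$, where part~(2) of Theorem~\ref{potential_ball} already displays $V^{(0)}_{B^n}(t)$ in elementary form; so from then on assume $\la\ne0$.

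For condition~(1), write $\la=2m$ with $m\in\mathbb{Z}\setminus\{0\}$. If $m>0$, the first numerator parameter in \eqref{V_hypergeom_up} is $-\la/2=-m$, a negative integer, so ${}_2F_1\!\left(-m,-\frac{\la-n}2;\frac n2;t^2\right)$ truncates to a polynomial in $t^2$; likewise the first parameter $1-m$ in \eqref{V_hypergeom_down} is a non-positive integer, so there $V^{(\la)}_{B^n}(t)$ equals $t^{\la-n}$ times a polynomial in $t^{-2}$. Both are elementary. If $m<0$, then $-\la$ is a positive even integer, so $V^{(-\la)}_{B^n}(t)$ is elementary by the previous step; since $\la$ is a negative integer, $(1-t^2)^\la$ and $(t^2-1)^\la$ are rational functions, and the reflection formula in $\la$ (Lemma~\ref{prop_V_ball_reflection_alpha}) then exhibits $V^{(\la)}_{B^n}(t)$ as a product of elementary functions. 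This case is essentially bookkeeping.

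For condition~(2), $n$ odd, I would not use the hypergeometric form but the boundary integral \eqref{f_V_0}. Slicing $S^{n-1}$ by the hyperplanes $\{y_1=u\}$, $-1\le u\le1$, and using the disintegration $\int_{S^{n-1}}f(y_1)\,d\sigma(y)=\sigma_{n-2}\int_{-1}^1 f(u)(1-u^2)^{(n-3)/2}\,du$, the potential becomes, up to the constant $\sigma_{n-2}/\la$, the single integral $\int_{-1}^1(1-2tu+t^2)^{(\la-n)/2}(1-ut)(1-u^2)^{(n-3)/2}\,du$. The point is that for $n$ odd $(n-3)/2$ is a non-negative integer, so $(1-ut)(1-u^2)^{(n-3)/2}$ is a genuine polynomial in $u$; substituting $s=1-2tu+t^2$ (allowed for $t\ne0$, and $V^{(\la)}_{B^n}(0)=\AS/\la$ anyway) turns the integrand into a finite linear combination of powers $s^{(\la-n)/2+k}$, $k\in\mathbb{Z}_{\ge0}$, whose antiderivatives are $s^{(\la-n)/2+k+1}/((\la-n)/2+k+1)$, or $\log s$ in the resonant case $(\la-n)/2+k=-1$. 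Since the limits of integration are $s=(1\mp t)^2>0$ (using $|t|\ne1$), evaluating finishes the proof.

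The only delicate point I anticipate is the resonant subcase in~(2), where one of the antiderivatives is logarithmic rather than a power and one must note that $\log s$ is still elementary; and it is worth remarking that for $n$ even the same substitution fails, because $(1-u^2)^{(n-3)/2}$ then carries a genuine square root, which is precisely why condition~(1) (or some additional input) is needed in that range.
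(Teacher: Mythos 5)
Your proposal is correct and follows essentially the same route as the paper: dispose of $\la=0$ via the explicit log formulas, get case (1) from the termination of the hypergeometric series (using the reflection in $\la$ for negative even $\la$), and get case (2) by slicing the sphere along $y_1$ and integrating powers of $t^2+1-2ty_1$, with the same logarithmic resonant subcase. The only cosmetic differences are that the paper first reduces to $|t|<1$ and $\la>0$ via the inversion and $\la$-reflection lemmas before treating either case, and parametrizes the slice by $\theta$ rather than $u=\cos\theta$.
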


We remark that we have assumed that $\n\ge2$ in this article, although the statement holds even if $\n=1$. 

\begin{proof}
The case when $\la=0$ follows from Theorem \ref{potential_ball} (2). 
In what follows we may assume $|t|<1$ and $\la>0$. The first assumption is justified by Lemma \ref{lemma_reflection_t} and the second by Lemma \ref{prop_V_ball_reflection_alpha}. 

\smallskip
(1) If $\la$ is an even natural number then $-\frac\la2$ is a negative integer, hence ${}_2F_1\left(-\la/2, -(\la-n)/2;\,n/2;\,t^2\right)$ in \eqref{V_hypergeom_up} is a polynomial of $t^2$. 

\medskip
(2) Assume $\la\ne0$ and $n$ is odd. By \eqref{f_V_0} 
\[\begin{array}{rcl}
\la V(t)&=&\displaystyle \int_{S^{n-1}}{\left(t^2+1-2ty_1\right)}^{(\la-n)/2}(1-ty_1)\,dy \\ [4mm]
&=&\displaystyle \int_0^\pi\sigma_{n-2}\sin^{n-2}\theta 
{\left(t^2+1-2t\cos\theta\right)}^{(\la-n)/2}(1-t\cos\theta)\,d\theta \\[4mm]
&=&\displaystyle \sigma_{n-2}\int_0^\pi (1-t\cos\theta){(\sin^2\theta)}^{(n-3)/2}
\sin\theta {\left(t^2+1-2t\cos\theta\right)}^{(\la-n)/2}\,d\theta.
\end{array}\]
Since
\[\begin{array}{rcl}
1-t\cos\theta&=&\displaystyle \frac12\left(t^2+1-2t\cos\theta\right)-\frac12\left(t^2-1\right), \\[4mm]
\sin^2\theta &=& \displaystyle 1-\frac1{4t^2}{\left[
\left(t^2+1-2t\cos\theta\right)-(t^2+1)
\right]}^2
\end{array}\]
and $(\n-3)/2$ is a non-negative integer, 
$\la V(t)/\sigma_{n-2}$ can be expressed as the sum of terms of the form 
\begin{equation}\label{f_beta}
\int_0^\pi q(t^2)\sin\theta {\left(t^2+1-2t\cos\theta\right)}^\beta\,d\theta,
\end{equation}
where $q(t^2)$ is a rational function of $t^2$ and $\beta\in\RR$. Since 
\begin{numcases}
{\int_0^\pi \sin\theta {\left(t^2+1-2t\cos\theta\right)}^\beta\,d\theta=} 
\frac{{(t+1)}^{2(\beta+1)}-{(t-1)}^{2(\beta+1)}}{2t(\beta+1)} & (if $\beta\ne-1$), 
\nonumber \\
\frac1t\log\left|\frac{t+1}{t-1}\right| & (if $\beta=-1$), 
\nonumber 
\end{numcases}
$V(t)$ can be expressed by an elementary function. 
\end{proof}

We remark that 
$\log|t+1|/|t-1|$ appears in $V(t)$ when $\n$ and $\la$ are odd integers 
with $|\la|\le n-2$ since 
\[
\beta=\frac{\la-n}2, \frac{\la-n}2+1, \dots, \frac{\la-n}2+1+n-3.
\]

If $\la-n+2$ is an even natural number then $-(\la-\n)/2$ is a non-positive integer, hence \eqref{V_hypergeom_up} implies that 
$V^{(\la)}_{B^n}(t)$ is a polynominal of $t^2$ for $|t|<1$. 

When $\la=2$, which is the case of Newton potential when $n\ge3$, 
\[
V^{(2)}_{B^n}(t)=\left\{
\begin{array}{ll}
\displaystyle \left(1-\frac{n-2}{n}t^2\right)\frac{\AS}{2} &\hspace{0.5cm}(|t|<1),\\[4mm]
\displaystyle \frac{\AS}{nt^{n-2}} &\hspace{0.5cm}(|t|>1) .
\end{array}
\right.
\]

\begin{proposition}
The log potential satisfies 
\[
V^{\log}_{\Omega}(x)=-\left. \frac\partial{\partial\la}\,{V}^{(\la)}_{\Omega}(x)\right|_{\la=n}
\]
for $x\not\in\pO$. 
\end{proposition}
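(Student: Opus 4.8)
The plan is to differentiate $V^{(\la)}_\Om(x)$ in $\la$ under the integral sign and then set $\la=\n$. First I would fix $x\notin\pO$ and note that, since the statement only concerns a neighborhood of $\la=\n$, it suffices to work with $\la$ restricted to a compact interval $[\la_0,\la_1]$ with $0<\la_0\le\n\le\la_1$. For such $\la$ no regularization is involved, so by definition
\[
V^{(\la)}_\Om(x)=\int_\Om {|x-y|}^{\an}\,d\mu(y),
\]
an absolutely convergent integral, since $\an>-\n$ and $\Om$ is bounded.

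Next I would exhibit an integrable majorant for the $\la$-derivative of the integrand. Putting $D=\diam(\Om)+|x|$ we have $|x-y|\le D$ for every $y\in\Om$, and for $\la\in[\la_0,\la_1]$,
\[
\Bigl|\frac{\partial}{\partial\la}{|x-y|}^{\an}\Bigr|={|x-y|}^{\an}\,\bigl|\log|x-y|\bigr|\le g(y),
\]
where $g(y)={|x-y|}^{\la_0-\n}\bigl|\log|x-y|\bigr|$ if $|x-y|\le1$ and $g(y)$ is a suitable constant depending only on $D,\la_1,\n$ if $|x-y|>1$. The key point is that $g$ is integrable over $\Om$: its only possible singularity is at $y=x$ in the case $x\in\interior\Om$, and there, in polar coordinates centered at $x$,
\[
\int_0 r^{\la_0-\n}\,|\log r|\,r^{\n-1}\,dr=\int_0 r^{\la_0-1}\,|\log r|\,dr<\infty
\]
because $\la_0>0$; when $x\in\Om^c$ the integrand is bounded on $\Om$ and this step is vacuous.

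The standard theorem on differentiation under the integral sign then shows that $\la\mapsto V^{(\la)}_\Om(x)$ is differentiable near $\la=\n$ with $\frac{\partial}{\partial\la}V^{(\la)}_\Om(x)=\int_\Om {|x-y|}^{\an}\log|x-y|\,d\mu(y)$, and evaluating at $\la=\n$ gives $\frac{\partial}{\partial\la}V^{(\la)}_\Om(x)\big|_{\la=\n}=\int_\Om\log|x-y|\,d\mu(y)=-V^{\log}_\Om(x)$, which is the assertion. I expect the one delicate point to be the local integrability of the majorant near $y=x$, which is exactly where one uses that $\la$ stays away from $0$ (equivalently that $|\an|$ is small); the rest is routine. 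Alternatively one could deduce the identity from holomorphy of $\la\mapsto V^{(\la)}_\Om(x)$ on $\{\R\la>0\}$ together with termwise differentiation of its expansion, but the dominated-convergence route above seems most direct.
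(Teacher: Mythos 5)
Your argument is correct, but it follows a genuinely different route from the paper's. You differentiate the volume integral $\int_\Om|x-y|^{\an}\,d\mu(y)$ in $\la$ under the integral sign, justified by dominated convergence with the majorant ${|x-y|}^{\la_0-\n}\bigl|\log|x-y|\bigr|$, whose local integrability near $y=x$ uses exactly that $\la_0>0$; evaluating at $\la=\n$ gives $\int_\Om\log|x-y|\,d\mu(y)=-V^{\log}_\Om(x)$. The paper instead works entirely on the boundary: it starts from the boundary-integral expression $V^{\log}_{\Om}(x)=-\frac1n\int_{\pO}\bigl(\log|x-y|-\frac1n\bigr)(y-x)\cdot\nu\,d\sigma(y)$ quoted from \cite{O1}, writes $\log|x-y|=\lim_{\la\to n}\bigl({|x-y|}^{\la-n}-1\bigr)/(\la-n)$ inside that integral, identifies the resulting difference quotient as $\bigl(\la V^{(\la)}_\Om(x)-nV^{(n)}_\Om(x)\bigr)/(\la-n)$ via \eqref{V_alpha_boundary}, and extracts the derivative at $\la=n$ together with the term ${\rm Vol}(\Om)=V^{(n)}_\Om(x)$ that cancels the $-\frac1n$ in the boundary formula. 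Your version is more self-contained: it does not invoke the boundary-integral machinery of \cite{O1}, and it establishes the differentiability of $\la\mapsto V^{(\la)}_\Om(x)$ at $\la=\n$ explicitly rather than implicitly through the limit of a difference quotient, at the cost of a (routine) domination estimate. One cosmetic point: your constant $D=\diam(\Om)+|x|$ bounds $|x-y|$ on $\Om$ only if $0\in\Om$; any finite upper bound for $\sup_{y\in\Om}|x-y|$ does the job.
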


\begin{proof} The log potential has a boundary integral expression 
\begin{equation}\label{f_last_3}
V^{\log}_{\Omega}(x)=-\frac1n\int_{\partial\Omega}\left(\log|x-y|-\frac1n\right)(y-x)\cdot\nu\,dy. 
\end{equation}
(\cite{O1}, Subsesction 2.7). 
First remark  
\begin{equation}\label{f_last_4}
\int_{\partial\Omega} (y-x)\cdot\nu\,dy =nV^{(n)}_\Omega(x)=n\mbox{Vol}(\Omega).
\end{equation}
Hence \eqref{V_alpha_boundary} implies 
\begin{eqnarray}
\displaystyle \int_{\partial\Omega}\log|x-y| (y-x)\cdot\nu\,dy 
&=&\displaystyle \lim_{\la\to n} \int_{\partial\Omega} \frac{{|x-y|}^{\la-n}-1}{\la-n} \,(y-x)\cdot\nu\,dy \nonumber \\
&=&\displaystyle \lim_{\la\to n} \frac{\la V^{(\la)}_\Omega(x)-nV^{(n)}_\Omega(x)}{\la-n} \nonumber \\
&=&\displaystyle \lim_{\la\to n} \frac{(\la-n)V^{(n)}_\Omega(x)+\la\left(V^{(\la)}_\Omega(x)-V^{(n)}_\Omega(x)\right)}{\la-n} \nonumber \\
&=&\displaystyle \mbox{Vol}(\Omega)+n\lim_{\la\to n} \frac{V^{(\la)}_\Omega(x)-V^{(n)}_\Omega(x)}{\la-n} \nonumber \\[1mm]
&=&\displaystyle \mbox{Vol}(\Omega)+n \left. \frac\partial{\partial\la}\,{V}^{(\la)}_{\Omega}(x)\right|_{\la=n}. \label{f_last_5}
\end{eqnarray}
Substitution of \eqref{f_last_4} and \eqref{f_last_5} to \eqref{f_last_3} implies the conclusion. 
\end{proof}

Department of Mathematics and Informatics,Faculty of Science, 
Chiba University

1-33 Yayoi-cho, Inage, Chiba, 263-8522, JAPAN.  

E-mail: ohara@math.s.chiba-u.ac.jp

\end{document}